\theoremstyle{plain}
\newtheorem{thm}{Theorem}
\newtheorem{lemma}{Lemma}
\newtheorem{rmk}{Remark}
\theoremstyle{definition}
\newtheorem{defn}{Definition}
\newtheorem{prop}{Proposition}
\newtheorem{cor}{Corollary}
\newcommand{\ba}{\boldsymbol{\mathscr{A}}}
\newcommand{\bg}{\boldsymbol{\mathscr{G}}}
\newcommand{\llg}{\langle \langle}
\newcommand{\rrg}{\rangle\rangle}
\newcommand{\lgl}{\langle}
\newcommand{\rgl}{\rangle}
\title{Explicit Construction of Local Langlangds Correspondence of $GL(2,F)$ Using Theta Correspondence}
\author{Ran Cui}
\begin{document}

\maketitle

\begin{abstract}
The Langlands correspondence of $GL(2,F)$ over a non-Archimedean local field $F$ has been well studied in \cite{BH} and \cite{Bump}. One important step in constructing this correspondence is to construct a correspondence between the irreducible imprimitive 2-dimensional Deligne representations of the Weil group of $F$ and the irreducible cuspidal smooth representations of $GL(2,F)$. In this paper, we are going to describe explicitly how this construction is done. We will then provide new proofs to several properties of the correspondence. Including that the correspondence does not depend on the additive character $\psi$ introduced by the Weil representation.
\end{abstract}

\section{Introduction}

Throughout this paper, $F$ will denote a non-Archimedean local field with characteristics 0. The local Langlands conjecture says there exists a unique correspondence between the set 
\[
 \bg_2(F) :=\{ \text{2-dim, semi-simple Deligne representations of the Weil group } \mathcal{W}_F\}/ \sim
\]
and the set 
\[
 \ba_2(F) := \{ \text{irreducible smooth representations of }GL(2,F)\}/\sim 
\]
with certain properties:

\begin{defn}\label{locallanglands}[Local Langlands Conjecture] There is a unique bijection 
\[
 \boldsymbol{\pi} : \bg_2(F)\rightarrow \ba_2(F)
\]
such that 
\begin{equation}\label{Lcondition}
 L(\chi({\boldsymbol{\pi}}(\rho)),s) = L(\chi\otimes \rho, s)
\end{equation}
\begin{equation}\label{epsiloncondition}
 \epsilon(\chi({\boldsymbol{\pi}}(\rho)),s,\psi) = \epsilon(\chi\otimes \rho,s,\psi)
\end{equation}
for all $\rho\in \bg_2(F)$ and all characters $\chi$ of $F^{\times}$ and all $\psi\in \widehat{F}$, $\psi\neq 1$.

Here $L(\pi,s)$ is the L-function and $\epsilon(\pi,s,\psi)$ is the local-constant \cite{BH}. And
\[
 \chi\pi = \pi\otimes (\chi\circ \det)
\]

\end{defn}

We have the following decomposition:
 \[
  \bg_2^0(F) = \{\text{irreducible, 2-dim, semi-simple Deligne representation of }\mathcal{W}_F\}
 \]
 \[
  \bg_2^1(F) = \{\text{reducible, 2-dim, semi-simple Deligne representations of }\mathcal{W}_F\}
 \]
 
 \[
  \bg_2(F) = \bg_2^0(F)\cup \bg_2^1(F)
 \] 
 \[
  \ba_2^0(F) = \{ \text{irreducible cuspidal smooth representations of }GL(2,F)\}
 \]
 \[
  \ba_2^1(F) = \{ \text{irreducible non-cuspidal smooth representations of }GL(2,F)\}
 \]
 
 \[
  \ba_2(F) = \ba_2^0(F)\cup \ba_2^1(F)
 \]
 \begin{prop}\cite[P213]{BH}
  Any map $\boldsymbol{\pi}$ satisfying equation (\ref{Lcondition}) and (\ref{epsiloncondition}) must take $\bg_2^1(F)$ to $\ba_2^1(F)$ and $\bg_2^0(F)$ to $\ba_2^0(F)$.

 \end{prop}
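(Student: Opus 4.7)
The plan is to characterize membership in $\bg_2^0(F)$ and in $\ba_2^0(F)$ purely in terms of $L$-functions of character twists, and then to deduce the proposition immediately from condition (\ref{Lcondition}); the $\epsilon$-factor condition will not be needed.

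First I would establish the equivalence
\[
\rho \in \bg_2^0(F) \iff L(\chi\otimes\rho,s) = 1 \text{ for every character } \chi \text{ of } F^\times.
\]
For the forward direction, an irreducible $2$-dimensional representation of $\mathcal{W}_F$ cannot factor through the abelian quotient $\mathcal{W}_F/I_F$, so it must be ramified; since the inertia invariants $V^{I_F}$ form a $\mathcal{W}_F$-stable subspace, irreducibility forces $V^{I_F}=0$, hence $L(\rho,s)=1$, and the same applies after any character twist, which preserves irreducibility. For the converse, a reducible semisimple $2$-dimensional Deligne representation always has a $1$-dimensional subquotient, say $\chi_0$: if $N=0$ this is obvious, and if $N\neq 0$ then $\mathrm{Im}(N)$ is a $1$-dimensional Deligne subrepresentation. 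Twisting by $\chi_0^{-1}$ produces the trivial character as a subquotient, hence a pole at $s=0$.

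Next I would establish the parallel statement on the automorphic side:
\[
\pi \in \ba_2^0(F) \iff L(\chi\pi,s)=1 \text{ for every character } \chi \text{ of } F^\times.
\]
Cuspidality is preserved by twisting and implies the trivial $L$-factor via the Godement--Jacquet integral. Conversely, every non-cuspidal irreducible representation is either a principal series $\pi(\mu_1,\mu_2)$, a twist of the Steinberg representation, or a one-dimensional $\chi\circ\det$; in each case I would exhibit an explicit twist whose $L$-factor has a pole, quoting \cite{BH} for the standard formulas.

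Combining the two characterizations with (\ref{Lcondition}) now finishes the argument: if $\rho\in\bg_2^0(F)$, then $L(\chi\,\boldsymbol{\pi}(\rho),s)=L(\chi\otimes\rho,s)=1$ for all $\chi$, forcing $\boldsymbol{\pi}(\rho)\in\ba_2^0(F)$, while for $\rho\in\bg_2^1(F)$ some twist has a pole on the Galois side and therefore on the automorphic side, forcing $\boldsymbol{\pi}(\rho)\in\ba_2^1(F)$. The main obstacle is assembling the two $L$-function characterizations cleanly from the classifications on each side; once those are in hand, the proposition follows with no further work.
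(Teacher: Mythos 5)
Your proof is correct and it is essentially the argument behind the paper's citation of \cite{BH}: one characterizes $\bg_2^0(F)$ and $\ba_2^0(F)$ by triviality of all twisted $L$-factors and then invokes only condition (\ref{Lcondition}). Your observation that the $\epsilon$-condition (\ref{epsiloncondition}) is not needed for this step is also accurate.
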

 
 The map 
 \[
  \boldsymbol{\pi^1}: \bg_2^1(F)\rightarrow \ba_2^1(F)
 \]
is clearly presented in \cite[P213]{BH}.

The heart of the matter is therefore to construct map:
\[
 \boldsymbol{\pi}:\bg_2^0(F)\rightarrow \ba_2^0(F)
\]

There is more than one way to construct this map. Here, we will be using the theta correspondence. The outline is described in the following diagram:
\begin{center}
\begin{tikzpicture}[description/.style={fill=white,inner sep=2pt}]
\matrix (m) [matrix of math nodes, row sep=4em,
column sep=4em, text height=1.5ex, text depth=0.25ex]
{ \rho_{\Theta} & (E/F,\Theta) & \theta \\
\pi_{\Theta}& \pi_{\Theta,\psi} & \pi_{\theta,\psi} \\ };
\path[<->,font=\scriptsize]
(m-2-1) edge node[description]{local Langlands correspondence} (m-1-1);
\path[->,font=\scriptsize]
(m-1-1) edge node[auto] {Theorem\ref{admpairsparamG}} (m-1-2)
(m-1-2) edge node[auto] {$\Theta|_{E^1}$} (m-1-3)
(m-1-3) edge node[description] {theta correspondence} (m-2-3)
(m-2-3) edge node[auto] {extension} (m-2-2)
(m-2-2) edge node[auto] {induction} (m-2-1);
\end{tikzpicture}
\end{center}

The process goes clockwise beginning at $\rho_{\Theta}\in \bg_2^0(F)$. 
\begin{enumerate}
 \item $\bg_2^0(F)\ni \rho_{\Theta}\mapsto (E/F,\Theta)\in \mathbb{P}(F)$, this map is described in Theorem~\ref{admpairsparamG}.
 \item $\mathbb{P}(F)\ni (E/F,\Theta)\mapsto \theta:= \Theta|_{E^1},$ here $E^1\cong SO(2,F)$.
 \item $\widehat{SO}(2,F)\ni \theta\mapsto \pi_{\theta,\psi}\in \widehat{SL}(2,F)$. This map is produced by (some modification of) the theta correspondence.
 \item $\widehat{SL}(2,F)\ni \pi_{\theta,\psi} \mapsto \pi_{\Theta,\psi}\in \widehat{GL}(2,F)$. The representation $\pi_{\Theta,\psi}$ is obtained by extending $\pi_{\theta,\psi}$ and then inducing the extended representation.
\end{enumerate}

The core of the construction is to use the theta correspondence for the dual pair $(O(2,F),Sp(2,F))$ in $Sp(4,F)$. This allows us to get a representation $\pi_{\theta,\psi}$ of $Sp(2,F)\cong SL(2,F)$ from $\theta$. Note that the dual pair correspondence depends on $\psi$.

Since the Langlands correspondence should not involve $\psi$, naturally we would like to prove the representation $\pi_{\Theta}$ is independent of $\psi$. Bushnell and Henniart \cite{BH} used the local constant to prove this fact. Here we derive this fact more directly from properties of the Weil representation. In addition, the irreducibility of $\pi_{\Theta}$ is also naturally built in.

\section{Admissible Pairs}

Most of the content in this section is from \cite{BH}. 

Consider a pair $(E/F,\chi)$, where $E/F$ is a tamely ramified quadratic field extension and $\chi$ is a character of $E^{\times}$

\begin{defn}
 The pair $(E/F,\chi)$ is called \emph{admissible} if:
 \begin{enumerate}
  \item $\chi$ does not factor through the norm map $N_{E/F}: E^{\times}\rightarrow F^{\times}$ and,
  \item if $\chi|U_E^1$ does factor through $N_{E/F}$, then $E/F$ is unramified.
 \end{enumerate}
 Set $U_E^1 = 1+\mathfrak{p}$ where $\mathfrak{p}$ is the maximal ideal of the discrete valuation ring in $E$.
\end{defn}

\begin{defn}\cite[P127]{BH}
 Admissible pairs $(E/F,\chi)$, $(E^{\prime}/F,\chi^{\prime})$ are said to be \emph{$F$-isomorphic} if there is an $F$-isomorphism $j: E\rightarrow E^{\prime}$ such that $\chi = \chi^{\prime}\circ j$. In the case $E = E^{\prime}$, this amounts to $\chi^{\prime} = \chi^{\sigma}$, $\sigma\in Gal(E/F)$.
\end{defn}
Let $\mathbb{P}(F)$ be the $F$-isomorphism classes of admissible pairs $(E/F,\chi)$

\begin{thm}\cite{BH}\label{admpairsparamG}
 There is a bijection $\mathbb{P}(F)\leftrightarrow \bg_2^0(F)$.
\end{thm}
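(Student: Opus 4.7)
The plan is to construct the bijection by induction of characters. Given $(E/F,\chi) \in \mathbb{P}(F)$, use local class field theory to regard $\chi$ as a character of the Weil group $\mathcal{W}_E$, and define
\[
\rho(E,\chi) := \mathrm{Ind}_{\mathcal{W}_E}^{\mathcal{W}_F}\chi,
\]
viewed as a Deligne representation with zero nilpotent operator. Four items need to be checked, in order: (a) $\rho(E,\chi)$ lies in $\bg_2^0(F)$; (b) the assignment descends to $F$-isomorphism classes; (c) it is injective; (d) it is surjective.

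For (a), the induced representation is automatically $2$-dimensional and semi-simple. Mackey's criterion says it is irreducible exactly when $\chi \neq \chi^\sigma$ for $\sigma$ generating $\mathrm{Gal}(E/F)$, and by Hilbert 90 this is equivalent to $\chi$ not factoring through $N_{E/F}$, i.e.\ to admissibility condition (1). For (b), the identity $\mathrm{Ind}_{\mathcal{W}_E}^{\mathcal{W}_F}\chi \cong \mathrm{Ind}_{\mathcal{W}_E}^{\mathcal{W}_F}\chi^\sigma$ together with functoriality of induction under an $F$-isomorphism $j:E \to E'$ gives well-definedness on $\mathbb{P}(F)$. For (c), Frobenius reciprocity yields $\rho(E,\chi)|_{\mathcal{W}_E} = \chi \oplus \chi^\sigma$, so the unordered pair $\{\chi,\chi^\sigma\}$ is recovered from $\rho$ once $E$ is identified; to identify $E$, one uses that $\mathcal{W}_E$ is the stabilizer in $\mathcal{W}_F$ of a character line in $\rho|_{\mathcal{W}_E}$, which is canonical because $\rho$ is irreducible.

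The main obstacle is (d), surjectivity. Given irreducible $\rho \in \bg_2^0(F)$, one must produce an index-two open subgroup $\mathcal{W}_E \le \mathcal{W}_F$, with $E/F$ a tamely ramified quadratic extension, on which $\rho$ becomes reducible. The standard route is to restrict $\rho$ to the wild inertia $\mathcal{P}_F$: since $\mathcal{P}_F$ is pro-$p$ and acts through a finite quotient on a complex $2$-dimensional space, $\rho|_{\mathcal{P}_F}$ is a sum of characters, and $\mathcal{W}_F$ permutes the isotypic components. Irreducibility of $\rho$ forces either a single isotypic component---in which case the analysis passes to the tame inertia and Frobenius---or two components exchanged by an element outside a specific index-two subgroup, and the latter provides $\mathcal{W}_E$ while a chosen summand provides $\chi$. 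Finally, admissibility must be verified: condition (1) is equivalent to the irreducibility of the resulting induced representation, and condition (2)---the tameness constraint on $\chi|_{U_E^1}$---must hold because a pair failing (2) would be $F$-isomorphic to an admissible pair coming from the unramified quadratic extension, and including both would destroy injectivity. Establishing (2) and handling the residue-characteristic-$2$ case cleanly is the delicate technical step.
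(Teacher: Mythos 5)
Your construction of the forward map by composing with the Artin map and inducing from $\mathcal{W}_E$ to $\mathcal{W}_F$, with irreducibility detected by $\chi \neq \chi^{\sigma}$ (via Hilbert 90 and Mackey), is exactly what the paper does; note, however, that the paper does not itself prove bijectivity but defers that entirely to Bushnell--Henniart, so your sketch of (b)--(d) is going beyond what the paper attempts. A few of the steps as you have phrased them have real gaps. In (c), the identity $\rho|_{\mathcal{W}_E} \cong \chi \oplus \chi^{\sigma}$ is Mackey's restriction formula for induction from an index-two subgroup, not Frobenius reciprocity; and the recipe ``$\mathcal{W}_E$ is the stabilizer of a character line in $\rho|_{\mathcal{W}_E}$'' is circular as written and does not address the substantive point, namely that a given irreducible $\rho$ may well become reducible upon restriction to \emph{several} distinct quadratic subgroups $\mathcal{W}_{E}$, $\mathcal{W}_{E'}$. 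Choosing among them is precisely where admissibility condition (2) does its work: it forces the unramified choice whenever $\chi|_{U_E^1}$ is norm-factorable, and injectivity cannot be established without invoking this normalization explicitly. In (d), the wild-inertia argument produces a quadratic extension over which $\rho$ splits, but nothing in that argument guarantees the extension is \emph{tamely ramified}. In residue characteristic $2$ every ramified quadratic extension is wild, so the argument does not land inside $\mathbb{P}(F)$; moreover primitive (non-induced) irreducibles genuinely exist there, so surjectivity onto $\bg_2^0(F)$ fails outright. The theorem as stated therefore implicitly assumes odd residue characteristic, as the paper acknowledges in a later remark ($\bg_2^{\mathrm{im}}(F) \subsetneq \bg_2^0(F)$ when $p=2$). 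You flagged both issues as ``delicate,'' which is honest, but flagging is not proving; these are the parts where the content of the theorem actually lives.
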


Here's how the map is constructed.
 
 If $(E/F,\chi)$ be an admissible pair, then $\chi$ can be viewed as a character of the Weil group $\mathcal{W}_E$ by composing with the Artin map: $\boldsymbol{a}_E:\mathcal{W}(E)\rightarrow E^{\times}$. We know that $\mathcal{W}(E)$ is an index two subgroup in $\mathcal{W}(F)$, we can induce $\chi$:
 \[
  \rho_{\chi} = \text{Ind}_{\mathcal{W}(E)}^{\mathcal{W}(F)}\chi
 \]
 This induced representation is irreducible by Clifford theory, since $\chi$ does not factor through $N_{E/F}$ is equivalent to the fact that $\chi\neq \chi^{\sigma}$, and the Artin map is $\text{Gal}(E/F)$-equivalent. 

The proof of the bijectivity of this map is given in \cite{BH}. 

 \begin{rmk}
  The admissible pairs $\mathbb{P}(F)$ also parametrize $\ba_2^0(F)$, and it can therefore give a bijection between $\bg_2^0(F)$ and $\ba_2^0(F)$, but it is well known that this map does not satisfy equation (\ref{Lcondition}) and (\ref{epsiloncondition}). There are various ways to define the correct correspondence. In the next section we will introduce the basics of the theory of theta correspondence.
 \end{rmk}

\section{The Theta Correspondence}

We are going to explain the theory of theta correspondence in the generality that suits the purpose of this paper. First of all, we define a basic ingredient of the theta correspondence: a dual reductive pair $(G, G^{\prime})$ in $Sp(W)$. Then we will present the Schr\"odinger Model of Weil representation of the Metaplectic group $Mp(W)$. The restriction of the Weil representation to the cross product of the lifts $\widetilde{G}\times\widetilde{G^{\prime}}$ gives the theta correspondence between representations of $\widetilde{G}$ and $\widetilde{G^{\prime}}$. We apply this to $\widetilde{O}(E)\times \widetilde{Sp}(2,F)$.

Most of the material and detailed proofs in this section can be found in \cite{Kudla}, \cite{Prasad} and \cite{Howe}.

\subsection{Dual Reductive Pair}

\begin{defn}
 A \emph{reductive dual pair} is a pair of subgroups $(G,G^{\prime})$ in the symplectic group $Sp(W)$ such that: 
 \begin{enumerate}
  \item $G$ is the centralizer of $G^{\prime}$ in $Sp(W)$, and $G^{\prime}$ is the centralizer of $G$ in $Sp(W)$.
  \item the actions of $G$ and $G^{\prime}$ are completely reducible on $W$, i.e., any $G$-invariant subspace of $W$ has a $G$-invariant complement; similarly for $G^{\prime}$.
 \end{enumerate}
\end{defn}

What we will be using is a specific kind of dual pair. Let $V$ and $W$ be finite dimensional vector spaces over $F$; $V$ is equipped with a symmetric bilinear form $(,)$; and $W$ is equipped with a skew-symmetric bilinear form $\lgl, \rgl$. Necessarily $\dim(W)$ has to be even.

Let $\mathbb{W} = V\otimes W$ with symplectic form $\llg, \rrg$ = $ (,)\otimes \lgl,\rgl$.
The pair $(O(V),Sp(W))$ is a dual reductive pair in $Sp(\mathbb{W})$.

\subsection{Weil Representation and the Schr\"odinger Model}

Throughout this section $W$ is a symplectic vector space with non-degenerate symplectic bilinear form $\lgl, \rgl$ represented by $\begin{pmatrix}0&I \\ -I&0\end{pmatrix}$. Fix $X$ and $Y$ two transversal Lagrangian subspaces of $W$.

\begin{defn}
 The \emph{Heisenberg group} $H(W)$ is a non-trivial extension of $W$ of $F$. It is defined to be the group of pairs 
 \[
  \{ (w,t)| w\in W, t\in F\} 
 \]
with multiplication law:
\[
 (w_1,t_1)\cdot (w_2,t_2) = (w_1+w_2, t_1+t_2+\frac{1}{2}\lgl w_1,w_2\rgl)
\]
\end{defn}
Thus $H(W)$ fits in the exact sequence 
\[
 0\rightarrow Z \rightarrow H(W) \rightarrow W \rightarrow 0
\]
where $Z = \{ (0,t)| t\in F\} \cong F$ is the center of $H(W)$.

The irreducible unitary representations of $H(W)$ are classified by their central characters. 

\begin{thm}[Stone-von Neumann] \label{stonevonneumann}
 The Heisenberg group $H(W)$ has a unique (up to unitary equivalence) irreducible unitary representation $\rho_{\psi}$ with central character $\psi$.
\end{thm}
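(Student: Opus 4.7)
The plan is to prove both existence and uniqueness by explicit realization of $\rho_\psi$ in the Schr\"odinger model, using the polarization $W = X \oplus Y$ already fixed. First I would construct $\rho_\psi$ on the Hilbert space $L^2(X)$ (with respect to a Haar measure chosen self-dual for the pairing $(x, y) \mapsto \psi(\langle x, y\rangle)$): for $x_0 \in X$, $y_0 \in Y$, $t \in F$, define
\[
 \rho_\psi(x_0, 0) f(x) = f(x + x_0), \quad \rho_\psi(y_0, 0) f(x) = \psi(\langle x, y_0\rangle) f(x), \quad \rho_\psi(0, t) = \psi(t) \cdot \mathrm{Id}.
\]
A direct calculation shows that these operators obey the multiplication law of $H(W)$; the symplectic cocycle $\psi(\tfrac{1}{2}\langle w_1, w_2\rangle)$ arises exactly from comparing the resulting product with the prescribed one after writing elements in $X$-then-$Y$ order.

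For irreducibility I would invoke a commutant argument: Fourier transform on $X$ conjugates the operators $\{\rho_\psi(x_0, 0) : x_0 \in X\}$ into multiplication operators on $L^2(\hat X)$, while $\{\rho_\psi(y_0, 0) : y_0 \in Y\}$ already acts by multiplication on $L^2(X)$, so any operator commuting with all of $\rho_\psi(H(W))$ commutes with two independent maximal abelian subalgebras of $B(L^2(X))$ and must therefore be a scalar. For uniqueness I would identify the Schr\"odinger model with the induced representation $\mathrm{Ind}_A^{H(W)} \tilde{\psi}$, where $A = Y \oplus Z$ is a maximal abelian subgroup of $H(W)$ and $\tilde{\psi}(y, t) = \psi(t)$. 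Given any irreducible unitary $\pi$ with central character $\psi$, Frobenius reciprocity produces a non-zero intertwiner $\rho_\psi \to \pi$ as soon as $\pi|_A$ contains $\tilde{\psi}$, and the existence of such an $A$-eigenvector in $\pi$ follows from decomposing $\pi|_Y$ spectrally (using $\hat Y \cong X$ via the symplectic pairing) together with the observation that the $X$-action translates the spectrum transitively on $\hat Y$, so every character of $A$ extending $\psi$ in fact appears. Irreducibility of both sides then upgrades the intertwiner into a unitary isomorphism.

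The hard part will be the uniqueness step, whose spectral-theoretic portion requires care in the non-archimedean setting: one must justify the direct-integral decomposition of $\pi|_Y$ into characters and argue that $X$-translates act as a transitive (and measurable) group of automorphisms of $\hat Y$. This can either be handled by invoking Mackey's imprimitivity theorem in full generality, or, within the smooth-representation category, by the observation that for the locally profinite group $H(W)$ any irreducible smooth representation with fixed central character is automatically admissible and the whole argument becomes algebraic. Once the identification $\hat Y \cong X$ via $\psi\circ\langle\,,\,\rangle$ is in hand, the rest is a formal cocycle computation.
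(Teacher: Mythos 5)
The paper does not give its own argument for Stone--von Neumann but simply cites \cite{MVW}, and your proposal is precisely the standard route taken there: realize $\rho_\psi$ in the Schr\"odinger model as (compact) induction $\mathrm{c\text{-}Ind}_{Y\oplus Z}^{H(W)}\tilde\psi$, check the multiplication law via the $\frac{1}{2}\langle\,,\,\rangle$ cocycle, prove irreducibility by a commutant argument, and prove uniqueness via Mackey/Frobenius reciprocity. Your proof and the paper's cited one are essentially the same; the only thing to tighten is the commutant step (commuting with the $Y$-multiplications forces $T$ to lie in $L^\infty(X)$, and commuting with $X$-translations then forces it to be a scalar — it is not quite a matter of ``two independent maximal abelian subalgebras'') and, as you already note, working in the smooth/admissible category dissolves the direct-integral issues, which is exactly the setting of \cite{MVW} and of the space $\mathcal{S}(X)$ used throughout the paper.
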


The proof is given in \cite{MVW}.

These representations can be realized on the Schwartz space $\mathcal{S}=\mathcal{S}(X)$, the space of locally constant compactly supported complex valued functions on $X$. We will now give a construction of $(\rho_{\psi},\mathcal{S})$. Here we follow \cite{Kudla} very closely.

Let $H(Y)\cong Y\oplus Z$ be the Heisenberg group associated to $Y$ in $W$. Thus $H(Y)$ is a maximal abelian subgroup of $H(W)$. The character $\psi$ of $Z$ has a unique extension $\psi_Y$ to $H(Y)$, given by $\psi_Y(y,t) = \psi(t)$. Let 
\[
 S_Y = \text{Ind}_{H(Y)}^{H(W)} \psi_Y
\]
be the representation of $H(W)$ obtained from the character $\psi_Y$ by smooth induction. By definition:
\[
 S_Y = \{ f: H(W)\rightarrow \mathbb{C}| f(h_1h) = \psi_Y(h_1)\cdot f(h), \forall h_1\in H(Y), h\in H(W)
 \]
 \[
 \text{ and there is an open subgroup }L\subset W \text{ such that } f(h(u,0)) = f(h)\text{ for all }u\in L\}.
\]
Then $H(W)$ acts on $S_Y$ by right translation, in particular, $Z$ acts by the following:
\[
 (0,t)\cdot f(h) = f(h\cdot (0,t)) = f((0,t)\cdot h) = \psi_Y((0,t))f(h) = \psi(t)f(h) 
\]
Thus the central character is $\psi$. Denote this action of $H(W)$ on $S_Y$ as $\rho_{\psi}$.

Since $W = X+Y$ is a complete polarization of $W$, we have the following isomorphism:
\[
 S_Y\cong \mathcal{S}
\]
given by $f\mapsto \varphi$, $\varphi(x) = f((x,0))$.

\begin{lemma}
 The resulting action of $H(W)$ on $\mathcal{S}$ is given by:
 \[
   \rho_{\psi}((x+y,t))\cdot \varphi(x_0) = \psi(t+\lgl x_0, y\rgl + \frac{1}{2}\lgl x,y \rgl ) \cdot \varphi (x_0+x)
 \]

\end{lemma}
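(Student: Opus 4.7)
The plan is to unwind what the right-translation action of $H(W)$ on $S_Y$ becomes after transport through the isomorphism $S_Y \cong \mathcal{S}$, and then to factor the relevant group element so that the left-equivariance property of $f$ does the work.

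First, I would transcribe the definition. Let $\varphi \in \mathcal{S}$ correspond to $f \in S_Y$, so $f((x_0,0)) = \varphi(x_0)$. Since $\rho_\psi$ is right translation on $S_Y$,
\[
 (\rho_\psi((x+y,t))\cdot \varphi)(x_0) = \big(\rho_\psi((x+y,t))\cdot f\big)((x_0,0)) = f\big((x_0,0)\cdot (x+y,t)\big).
\]

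Second, I would compute the Heisenberg product on the right, using the multiplication law and the fact that $X$ is Lagrangian, so $\lgl x_0, x\rgl = 0$:
\[
 (x_0,0)\cdot (x+y,t) = \bigl(x_0+x+y,\; t+\tfrac{1}{2}\lgl x_0,\,x+y\rgl\bigr) = \bigl(x_0+x+y,\; t+\tfrac{1}{2}\lgl x_0,y\rgl\bigr).
\]
To exploit the defining property $f(h_1 h) = \psi_Y(h_1) f(h)$ for $h_1 \in H(Y)$, I need to rewrite this element as a product of some $(y,s)\in H(Y)$ with some element of the form $(x',0)$. A direct computation using the multiplication law and $\lgl y, x_0\rgl = -\lgl x_0, y\rgl$ gives
\[
 (y,\,s)\cdot (x_0+x,\,0) = \bigl(x_0+x+y,\; s - \tfrac{1}{2}\lgl x_0,y\rgl - \tfrac{1}{2}\lgl x,y\rgl\bigr),
\]
so matching the $t$-coordinate forces $s = t + \lgl x_0,y\rgl + \tfrac{1}{2}\lgl x,y\rgl$.

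Third, I would apply the left-equivariance to read off the value:
\[
 f\bigl((x_0,0)\cdot(x+y,t)\bigr) = \psi_Y\bigl((y,s)\bigr)\, f\bigl((x_0+x,0)\bigr) = \psi(s)\,\varphi(x_0+x),
\]
and substituting the value of $s$ yields exactly the claimed formula. The computation is essentially forced once the polarization and the Lagrangian condition on $X$ are used; the only real pitfall is keeping signs straight when swapping arguments of the skew-symmetric form, which is where I would concentrate my attention.
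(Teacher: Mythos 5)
Your proof is correct and follows the natural (indeed the only reasonable) approach: unwind the induced-module isomorphism $S_Y \cong \mathcal{S}$, compute the right-translate $(x_0,0)\cdot(x+y,t)$, refactor it as $(y,s)\cdot(x_0+x,0)$ with $(y,s)\in H(Y)$, and read off the answer from the $\psi_Y$-equivariance of $f$. The paper itself gives no proof of this lemma (it defers to Kudla), but your argument is exactly the standard computation, and the signs and the use of $\langle x_0,x\rangle = 0$ (both in $X$, which is Lagrangian) are handled correctly.
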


Now, we will explain how the representations of $H(W)$ give rise to a projective representation of $Sp(W) = \{ g\in GL(W)| \lgl w_1g, w_2g\rgl = \lgl w_1,w_2\rgl \}$.

The group $Sp(W)$ acts on $H(W)$ naturally:
\[
 g\cdot (w,t) = ( wg, t), \ \ g\in Sp(W)
\]
therefore acts on the representation $\rho_{\psi}$:
\[
 g\cdot \rho_{\psi} = \rho_{\psi}^g, \ \ \text{where } \rho_{\psi}^g((w,t)) = \rho_{\psi}(g\cdot (w,t)) = \rho_{\psi}(wg, t)
\]
It is easy to check that $\rho_{\psi}^g$ still has central character $\psi$. By Theorem~\ref{stonevonneumann}, we know $\rho_{\psi}\cong \rho_{\psi}^g$, therefore $g$ gives rise to $T(g): \mathcal{S}\rightarrow  \mathcal{S}$ such that the following diagram commutes:

\begin{center}
\begin{tikzpicture}[descr/.style={fill=white,inner sep=2.5pt}]
\matrix (m) [matrix of math nodes, row sep=3em,
column sep=3em]
{ \mathcal{S} & \mathcal{S} \\
\mathcal{S}& \mathcal{S} \\ };
\path[->,font=\scriptsize]
(m-1-1) edge node[auto] {$ T(g) $} (m-1-2)
edge node[auto,swap] {$ \rho_{\psi} $} (m-2-1)

(m-1-2) edge node[auto] {$\rho_{\psi}^g$} (m-2-2)

(m-2-1) edge node[auto] {$ T(g) $} (m-2-2);
\end{tikzpicture}
\end{center}
The intertwining operator $T(g)$ is unique up to scalar in $\mathbb{C}^{\times}$, so the map 
\[
 g\mapsto T(g)
\]
defines a projective representation of $Sp(W)$ on $\mathcal{S}$, i.e., a homomorphism 
\[
 Sp(W)\rightarrow GL(\mathcal{S})/\mathbb{C}^{\times}
\]
We also present a realization of this projective representation based on the previous realization of $\rho_{\psi}$ on $\mathcal{S}$.

Consider $W$ with the complete polarization we have fixed in the beginning of this section: $W = X+Y$, after choosing basis for $X$ and $Y$, we can write $g\in Sp(W)$ as 
\[
 g = \begin{bmatrix} A & B \\ C& D \end{bmatrix}
\]
where $A\in \text{End}(A)$, $B\in \text{Hom}(X,Y)$, $C\in \text{Hom}(Y,X)$ and $D\in \text{End}(Y)$.

\begin{defn}\cite{Kudla}
 The \emph{standard Segal-Shale-Weil representation} $\omega_{\psi}$ of $Sp(W)$ is realized on $\mathcal{S}$: 
 \[
  \left(\omega_{\psi}(g)\cdot \varphi\right) (x) = \int_{\ker C \backslash Y} \psi\left( \frac{1}{2}\lgl xA+yC, xB\rgl + \frac{1}{2}\lgl yC,xB+yD\rgl \right) \varphi(xA+yC)\ d\mu_g(y)
 \]
  $d\mu_g(y)$ is the unique Haar measure on $(Yg^{-1}\cap Y)\backslash Y$, hence on $(\ker C)\backslash Y$, such that $\omega_{\psi}(g)$ preserves the $L^2$ norm on $\mathcal{S}$.

\end{defn}
\begin{rmk}
 Note that this construction depends on the character $\psi$ of $F$ and the symplectic basis we chose. For detailed calculations, see \cite{RR} and \cite{Kudla}.
\end{rmk}

This realization determines a cocycle, i.e., a map $c: Sp(W)\times Sp(W)\rightarrow \mathbb{C}^*$:
\[
 \omega_{\psi}(g_1g_2) = c(g_1,g_2)\omega_{\psi}(g_1)\omega_{\psi}(g_2) 
\]
satisfying the cocycle condition:
\[
 c(g_1,g_2)c(g_1g_2,g_3) = c(g_1,g_2g_3)c(g_2,g_3)
\]
\begin{thm}\label{cocyclecanbenormalized}
The cocycle $c$ can be explicitly calculated and normalized such that it is $\pm 1$ valued. More specifically, we can find normalizing constant $m(g)$ such that the cocycle $\tilde{c}$ associated to the projective representation $\omega_{\psi}^{\prime}:=m(g)\omega_{\psi}(g)$ is $\pm 1$ valued.
\end{thm}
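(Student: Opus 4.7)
The plan is to compute the cocycle $c$ directly on a convenient set of generators for $Sp(W)$, locate the source of its multi-valuedness in the defining integral for $\omega_\psi$, and absorb that multi-valuedness into $m(g)$ using properties of the Weil index $\gamma_\psi$.

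First I would reduce to generators. Using the Siegel parabolic $P = MN$, with Levi $M = \{m(a): a\in GL(X)\}$ and unipotent radical $N = \{n(b)\}$ parametrized by symmetric $b: X\to Y$, together with the Weyl element $J = \begin{pmatrix} 0 & I \\ -I & 0 \end{pmatrix}$, the group $Sp(W)$ is generated by $M$, $N$, and $J$. A short evaluation of the defining integral gives $\omega_\psi(m(a))\varphi(x) = |\det a|^{1/2}\varphi(xa)$ and $\omega_\psi(n(b))\varphi(x) = \psi(\tfrac12 \lgl xb, x\rgl)\varphi(x)$; these formulas are honest (not merely projective) and show that $c$ is trivial on pairs drawn from $P$ alone, so all the nontriviality of $c$ comes from products involving $J$.

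Next I would compute $c$ on products involving $J$. For these, the formula for $\omega_\psi(J)$ collapses to a Fourier-type transform with kernel $\psi(\lgl x, y\rgl)$. Comparing $\omega_\psi(g_1)\omega_\psi(g_2)$ with $\omega_\psi(g_1 g_2)$ then reduces, after a linear change of variable, to evaluating an oscillatory integral $\int \psi(q(v))\,dv$ whose value is by definition the Weil index $\gamma_\psi(q)$ of an explicit quadratic form $q = q_{g_1,g_2}$ built from the Bruhat data of $g_1$, $g_2$, and $g_1 g_2$. A priori this Weil index is only an eighth root of unity.

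Finally, to construct the normalizer I would fix a normal form $g = n(b_1)\,m(a)\,J^{\epsilon}\,n(b_2)$ (Rao's Leray form) and set $m(g) := \gamma_\psi(Q(g))^{-1}$ for a quadratic form $Q(g)$ depending only on the normal-form datum of $g$. The two identities
\[
\gamma_\psi(q_1 \oplus q_2) = \gamma_\psi(q_1)\gamma_\psi(q_2), \qquad \gamma_\psi(a)\gamma_\psi(b)\gamma_\psi(ab)^{-1}\gamma_\psi(1)^{-1} = (a,b)_F,
\]
with $(\cdot,\cdot)_F$ the Hilbert symbol, collapse the eighth-root content of $\tilde c(g_1,g_2) := m(g_1 g_2)\,m(g_1)^{-1}m(g_2)^{-1}\,c(g_1,g_2)$ to a finite product of Hilbert symbols, and hence to a value in $\{\pm 1\}$. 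The hard part is the bookkeeping: selecting a $Q(g)$ for which the dependence on the auxiliary quadratic forms simplifies in \emph{every} cell of the Bruhat decomposition, and verifying the resulting identities case by case. This is essentially R.~Rao's explicit calculation of the metaplectic cocycle and is what is responsible for the metaplectic group being a double, rather than eightfold, cover of $Sp(W)$.
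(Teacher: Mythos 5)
Your proposal correctly outlines Ranga Rao's metaplectic cocycle calculation---Bruhat/Leray reduction, cocycle as Weil index of a quadratic form attached to the Leray invariant, and normalization by $m(g)$ built from $\gamma_\psi$ so that Hilbert-symbol identities collapse the eighth-root content to $\pm 1$---which is exactly what the paper invokes: it gives no independent argument, deferring to the Appendix where it quotes the relevant results of Rao, including $m(g) = \gamma_F(f(g),\tfrac{1}{2}\psi)^{-1}\gamma_F(\tfrac{1}{2}\psi)^{-j}$ with $f$ the determinant-class invariant on the Bruhat cells. Same approach; the paper just cites it rather than carrying it out.
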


\begin{rmk}
 In order to avoid putting heavy notations in this section, we will present explicit formulas for $m(g)$ and the normalized cocycle in the Appendix.
\end{rmk}

\begin{defn}[Weil representation]
 The projective representation gives rise to an ordinary representation of the two-fold covering space of $Sp(W)$, it is called the \emph{Weil representation}
\end{defn}

\begin{defn}[Schr\"odinger Model]
\[
 Mp(W) = \{ (g,\epsilon)| g\in Sp(W),  \epsilon = \pm 1\}
\]
with group multiplication 
\[
 (g_1,\epsilon)\cdot (g_2,\delta) = (g_1 g_2 , \epsilon \delta \tilde{c}(g_1,g_2))
\]
is the Metaplectic group and the representation of $Mp(W)$ on $\mathcal{S}$ 
\[
 \tilde{\omega}_{\psi}(g,\epsilon) = \epsilon\ \omega_{\psi}^{\prime}
\]
is called the \emph{Schr\"odinger model of the Weil representation}.
\end{defn}

One property of the Weil representation we obtain from construction is the following:
\begin{cor}\cite[1.11]{Howe}\label{weilreplemma}
 \[
  \tilde{\omega}_{\psi_t} \cong \tilde{\omega}_{\psi_{ts^2}}
 \]
for $t, s\in F^{\times}$
\end{cor}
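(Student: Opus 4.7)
The plan is to produce an explicit intertwining operator on $\mathcal{S}$ by exploiting a natural automorphism of the Heisenberg group. First, I would introduce the map $\phi_s : H(W) \to H(W)$ defined by $\phi_s(w, u) = (sw, s^2 u)$. A direct check against the Heisenberg multiplication law shows that $\phi_s$ is a group automorphism: the factor $s^2$ in the center precisely absorbs the rescaling $\langle sw_1, sw_2 \rangle = s^2 \langle w_1, w_2 \rangle$ of the symplectic form. The pulled-back representation $\rho_{\psi_t} \circ \phi_s$ is then irreducible with central character $u \mapsto \psi_t(s^2 u) = \psi_{ts^2}(u)$, so by the Stone--von Neumann Theorem~\ref{stonevonneumann} it is unitarily equivalent to $\rho_{\psi_{ts^2}}$. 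Using the formula for $\rho_\psi$ from the Lemma above, the operator $U_s$ on $\mathcal{S}$ given by $(U_s \varphi)(x) = \varphi(x/s)$ (with a suitable normalization of Haar measure so it is unitary) can be checked to satisfy $U_s \rho_{\psi_{ts^2}}(h) U_s^{-1} = \rho_{\psi_t}(\phi_s(h))$ for all $h \in H(W)$.

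Next, I would observe that $\phi_s$ commutes with the $Sp(W)$-action on $H(W)$: for any $g \in Sp(W)$, one has $\phi_s(g \cdot (w, u)) = (swg, s^2 u) = g \cdot \phi_s(w, u)$. Consequently, the intertwiner $T_{\psi_t}(g)$ produced by Stone--von Neumann from $\rho_{\psi_t} \cong \rho_{\psi_t}^g$ transfers via conjugation by $U_s$ to an intertwiner for $\rho_{\psi_{ts^2}} \cong \rho_{\psi_{ts^2}}^g$, which must agree with $T_{\psi_{ts^2}}(g)$ up to a scalar. This already establishes an isomorphism of the projective representations $\omega_{\psi_t}$ and $\omega_{\psi_{ts^2}}$ of $Sp(W)$, realized concretely as $U_s \omega_{\psi_{ts^2}}(g) U_s^{-1} = \omega_{\psi_t}(g)$ modulo $\mathbb{C}^\times$.

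To promote this projective equivalence to an honest isomorphism of the Metaplectic lifts $\tilde{\omega}_{\psi_t} \cong \tilde{\omega}_{\psi_{ts^2}}$, I would verify that the two normalized $\pm 1$-valued cocycles $\tilde{c}_{\psi_t}$ and $\tilde{c}_{\psi_{ts^2}}$ from Theorem~\ref{cocyclecanbenormalized} coincide, so that the Schr\"odinger models are defined over the same group $Mp(W)$ and $U_s$ descends to a genuine intertwiner. I expect this cocycle matching to be the main technical obstacle. The strategy is to use the explicit formulas for $m(g)$ and $\tilde{c}$ deferred to the Appendix: the unnormalized cocycle depends on $\psi$ only through Weil indices $\gamma_\psi$ of certain quadratic forms, and the normalizing factor $m(g)$ is chosen precisely to cancel the $\psi$-dependence up to squares. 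Since $\gamma_{\psi_{s^2}} = \gamma_\psi$ (the Weil index is invariant under rescaling the additive character by a square), the normalized cocycles agree, and the map $(g, \epsilon) \mapsto U_s \tilde{\omega}_{\psi_{ts^2}}(g, \epsilon) U_s^{-1} = \tilde{\omega}_{\psi_t}(g, \epsilon)$ realizes the desired isomorphism.
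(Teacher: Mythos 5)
The paper does not actually prove this corollary; it cites \cite[1.11]{Howe} and moves on, so there is no ``paper proof'' to compare against. Your argument is a correct and essentially standard proof from first principles, and its ingredients are exactly right: the automorphism $\phi_s(w,u)=(sw,s^2u)$ of $H(W)$, the observation that it rescales the central character from $\psi_t$ to $\psi_{ts^2}$, Stone--von Neumann to get the equivalence, the explicit dilation $U_s$ (a quick computation using the Lemma for $\rho_\psi$ on $\mathcal{S}$ confirms it intertwines $\rho_{\psi_{ts^2}}$ with $\rho_{\psi_t}\circ\phi_s$), the commutation of $\phi_s$ with the $Sp(W)$-action, and the independence of the Rao cocycle from $\psi$ up to squares. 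You also correctly isolate where the real work is, namely promoting a projective equivalence to an equivalence of genuine $Mp(W)$-representations.

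The one place you glide past a detail is the final sentence: knowing that the normalized cocycles coincide tells you $\tilde{\omega}_{\psi_t}$ and $\tilde{\omega}_{\psi_{ts^2}}$ live over the \emph{same} metaplectic group, and that $U_s$ conjugates one to a scalar multiple of the other, but it does not by itself force the scalar to be $1$; a priori $U_s\tilde{\omega}_{\psi_{ts^2}}(g,\epsilon)U_s^{-1}$ could differ from $\tilde{\omega}_{\psi_t}(g,\epsilon)$ by a character of $Sp(W)$. Two quick ways to close this: (i) $Sp(W)$ over a local field (with $|F|>3$) is perfect, so any such character is trivial; or (ii) check the scalar directly on the Borel and Weyl generators using the explicit formulas in the Appendix, where the $\psi$-dependence of $m(g)$ enters only through Weil indices $\gamma_F(\,\cdot\,,\tfrac{1}{2}\psi)$ and $\gamma_F(\tfrac{1}{2}\psi)$, both invariant under $\psi\mapsto\psi_{s^2}$, while the unnormalized operators transform exactly as computed by the $U_s$-conjugation. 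Either way the argument is complete; I would just make this last step explicit.
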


\subsection{Theta Correspondence}

The foundation of the theory is due to Howe.

We will start with a brief introduction to the general theory, then specialize to the case we are interested in. Here we follow \cite{Kudla} closely.

Let $(G,G^{\prime})$ be a dual pair in $Sp(W)$, $\widetilde{G}$ and $\widetilde{G^{\prime}}$ be the inverse image of $G$ and $G^{\prime}$ in $Mp(W)$.

\begin{lemma}\cite[Lemma 2.5]{MVW}
 If two elements in $Sp(W)$ commute then their arbitrary lifts in $Mp(W)$ also commute.
\end{lemma}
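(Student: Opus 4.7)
The plan is to reduce the lemma to showing that a sign $\kappa(g_1, g_2) \in \{\pm 1\}$, attached to each commuting pair, is always $+1$. Given any lifts $\tilde{g}_1, \tilde{g}_2 \in Mp(W)$ of commuting $g_1, g_2 \in Sp(W)$, the commutator $\tilde{g}_1 \tilde{g}_2 \tilde{g}_1^{-1} \tilde{g}_2^{-1}$ projects to $[g_1, g_2] = 1$ in $Sp(W)$, so it lies in the kernel $\{(1, \pm 1)\}$. Because this kernel is central, replacing either $\tilde{g}_i$ by its other lift leaves the commutator unchanged, so we obtain a well-defined sign $\kappa(g_1, g_2)$ depending only on $g_1$ and $g_2$.

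My next step would be to recover $\kappa$ through the Schr\"odinger model. Since $\tilde{\omega}_\psi(1, -1) = -\mathrm{id}$, the Weil representation is faithful on $Mp(W)$, so commutativity of lifts is equivalent to commutativity of the operators they induce on $\mathcal{S}$. Using the intertwining relation
\[
\omega_\psi(g)\,\rho_\psi(w, t)\,\omega_\psi(g)^{-1} = \rho_\psi(wg, t),
\]
both $\omega_\psi(g_1)\omega_\psi(g_2)$ and $\omega_\psi(g_2)\omega_\psi(g_1)$ produce the same conjugation automorphism of $\rho_\psi$ (since $g_1 g_2 = g_2 g_1$). Schur's lemma together with Theorem~\ref{stonevonneumann} forces these two operators to differ by a scalar, and the normalization of Theorem~\ref{cocyclecanbenormalized} confines that scalar to $\{\pm 1\}$. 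This scalar is exactly $\kappa(g_1, g_2)$.

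To rule out the value $-1$, I would first observe that $\kappa$ is bimultiplicative on commuting triples via the standard identity $[ab, c] = [a, c][b, c]$, which holds whenever the commutators are central. I would then appeal to the explicit formulas for $m(g)$ and the normalized cocycle $\tilde{c}$ presented in the Appendix. Subgroups over which the metaplectic cover admits a canonical splitting, such as a Siegel parabolic or a compatible Levi, supply base cases where $\kappa$ vanishes trivially; a general commuting pair is then reduced to such a standard position by decomposing $W$ into joint invariant subspaces for the actions of $g_1$ and $g_2$. I expect this final reduction to be the main obstacle: the preceding formalism (centrality, lift-independence, bimultiplicativity) is essentially free, but pinning down the positive sign in every case is a genuine cocycle computation requiring the detailed structure of $\tilde{c}$ on commuting pairs.
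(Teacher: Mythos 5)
The paper does not prove this lemma; it is a bare citation of MVW, Lemma~2.5, so there is no internal argument to compare against. Your proposal sets up the correct framework: the commutator $\kappa(g_1,g_2)\in\{\pm1\}$ of lifts is well-defined because the kernel is central, and it is bimultiplicative on commuting triples by the identity $[ab,c]=[a,c][b,c]$ valid for central commutators. The Schr\"odinger-model detour is harmless but redundant, since centrality already gives $\kappa\in\{\pm1\}$ without invoking Stone--von Neumann. The genuine gap is exactly where you flag it: you never establish $\kappa\equiv 1$, and the outlined reduction does not go through as stated. Joint invariant subspaces of two commuting elements of $Sp(W)$ need not be symplectic (they can be isotropic or have degenerate restricted form), so $W$ does not simply decompose into a direct sum of symplectic pieces preserved by both $g_1$ and $g_2$; one must pair dual isotropic summands into hyperbolic blocks and check that Rao's cocycle is additive across the resulting orthogonal decomposition. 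Moreover, a Siegel parabolic or Levi is not abelian and a general commuting pair cannot be simultaneously conjugated into it, so your proposed ``base cases with canonical splittings'' do not cover the situation. What actually closes the argument is a symmetry property of the normalized cocycle on commuting pairs: one must show $\tilde c(g_1,g_2)=\tilde c(g_2,g_1)$ whenever $g_1g_2=g_2g_1$. In the $\dim W=2$ case used downstream in this paper, this is immediate from the Appendix formula $\tilde c(g_1,g_2)=(f(g_1),f(g_2))\,(-f(g_1)f(g_2),\,f(g_1g_2))$, since each factor is visibly symmetric once $f(g_1g_2)=f(g_2g_1)$; in general dimension it requires the corresponding analysis of the Weil index attached to the Leray invariant $q(Y,Yg_2^{-1},Yg_1)$, which is the substance of MVW's proof and which your sketch leaves untouched.
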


Therefore, we have a natural homomorphism:
\[
 j: \widetilde{G}\times \widetilde{G^{\prime}}\rightarrow Mp(W)
\]
hence we can consider the pull back of the Weil representation to $\widetilde{G}\times \widetilde{G^{\prime}}$, namely $\tilde{\omega}_{\psi}\circ j$, by abuse of notation we denote this representation $\tilde{\omega}_{\psi}$.

The basic idea of theta correspondence is that the Weil representation of $Mp(W)$ is very ``small'', its restriction to $\widetilde{G}\times \widetilde{G^{\prime}}$ should decompose into irreducibles in a reasonable way.

Suppose $\pi$ is an irreducible admissible representation of $\widetilde{G}$. Let $S(\pi)$ be the maximal quotient of $\mathcal{S}$ on which $\widetilde{G}$ acts as a multiple of $\pi$. By \cite[Chapter 2]{MVW}, there is a smooth representation $\Theta_\psi(\pi)$ of $\widetilde{G^{\prime}}$ such that 
\[
 S(\pi)\cong \pi\otimes \Theta_{\psi}(\pi)
\]
and $\Theta_{\psi}(\pi)$ is unique up to isomorphism.

\begin{thm}[Howe Duality Principle]
 For any irreducible admissible representation $\pi$ of $\widetilde{G}$ 
 \begin{enumerate}
  \item Either $\Theta_{\psi}(\pi) =0$ or $\Theta_{\psi}(\pi)$ is an admissible representation of $\widetilde{G^{\prime}}$ of finite length.
  \item If $\Theta_{\psi}(\pi)\neq 0$, there is a unique $\widetilde{G^{\prime}}$ invariant submodule $\Theta_{\psi}^0(\pi)$ such that the quotient 
  \[
   \theta_{\psi}(\pi):=\Theta_{\psi}(\pi)/\Theta_{\psi}^0(\pi)
  \]
  is irreducible.
  \item If $\theta_{\psi}(\pi_1)$ and $\theta_{\psi}(\pi_2)$ are nonzero and isomorphic, then $\pi_1\cong \pi_2$.

 \end{enumerate}

\end{thm}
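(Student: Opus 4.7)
The plan is to address the three assertions in turn, drawing on the structure of the Weil representation developed above together with the general analysis of \cite{MVW}.

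For part (1), the strategy is a compact-open subgroup argument. Fix a compact open $K' \subset \widetilde{G'}$ and a sufficiently small compact open $K \subset \widetilde{G}$. Choosing a self-dual lattice in $W$ adapted to the polarization $W = X + Y$, one shows that $\mathcal{S}^{K \times K'}$ is finitely generated as a module over the Hecke algebra $\mathcal{H}(\widetilde{G}, K)$. Together with admissibility of $\pi$, this forces $\Theta_\psi(\pi)^{K'}$ to be finite-dimensional, yielding admissibility of $\Theta_\psi(\pi)$. Finite length follows because $\mathcal{S}$ is finitely generated as a $\widetilde{G} \times \widetilde{G'}$-module, hence so is the quotient $S(\pi)$, and therefore $\Theta_\psi(\pi)$.

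For part (2), the crux is the Howe duality statement that for each irreducible $\pi$ of $\widetilde{G}$ and each irreducible $\sigma$ of $\widetilde{G'}$, the space of $\widetilde{G}\times\widetilde{G'}$-equivariant maps from $\mathcal{S}$ onto $\pi \otimes \sigma$ is at most one-dimensional. Granting this multiplicity-one assertion, the maximal $\pi$-isotypic quotient of $\mathcal{S}$ under the $\widetilde{G}$-action is $\pi \otimes \Theta_\psi(\pi)$, and any two irreducible $\widetilde{G'}$-quotients of $\Theta_\psi(\pi)$ must coincide up to isomorphism; hence $\Theta_\psi(\pi)$ admits a unique maximal proper $\widetilde{G'}$-invariant submodule $\Theta_\psi^0(\pi)$, and the resulting $\theta_\psi(\pi)$ is irreducible. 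Part (3) then follows formally: given $\theta_\psi(\pi_1) \cong \theta_\psi(\pi_2) \cong \sigma$, the same multiplicity-one statement applied with the roles of $\widetilde{G}$ and $\widetilde{G'}$ swapped forces $\pi_1 \cong \pi_2$.

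The main obstacle is the multiplicity-one statement driving part (2). In the generality stated this is the Howe duality conjecture, established by Waldspurger for odd residual characteristic via an MVW involution together with a careful analysis of cuspidal support. For the specific dual pair $(O(V), Sp(2,F))$ with $\dim V = 2$ relevant to this paper, however, the argument can be made concrete: $E^1 \cong SO(V)$ is a compact torus, so the restriction of $\tilde{\omega}_\psi$ to $\widetilde{SO(V)} \times \widetilde{Sp(2,F)}$ decomposes explicitly into characters on the orthogonal factor, and seesaw identities relating $(O(V), Sp(2,F))$ to $(O(1) \times O(1), Sp(2,F) \times Sp(2,F))$ reduce multiplicity one to a direct computation on $SL(2,F)$ using the explicit Schr\"odinger model formulas recorded in the Appendix.
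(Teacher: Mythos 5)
The paper does not prove this theorem: it is quoted as a known result, with the surrounding discussion pointing the reader to \cite{MVW}, \cite{Kudla}, and \cite{Howe} for the general theory of the theta correspondence. So there is no in-paper argument against which to measure your sketch, and anything you write here is genuinely supplementary.

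Your outline follows the general shape of the known proofs, but part (2) as written has a real gap. You take as hypothesis the multiplicity-one assertion that $\dim \operatorname{Hom}_{\widetilde{G}\times\widetilde{G^{\prime}}}(\mathcal{S},\pi\otimes\sigma)\leq 1$ for all irreducible $\pi,\sigma$, and from it conclude that any two irreducible $\widetilde{G^{\prime}}$-quotients of $\Theta_{\psi}(\pi)$ coincide up to isomorphism. That implication is not valid. Multiplicity one says each isomorphism class $\sigma$ appears at most once in the cosocle of $\Theta_{\psi}(\pi)$; it does not preclude two non-isomorphic $\sigma_1\not\cong\sigma_2$ from both appearing there, in which case $\Theta_{\psi}(\pi)$ has no unique maximal proper submodule and $\theta_{\psi}(\pi)$ is not well defined. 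The exclusion of that scenario --- uniqueness of the irreducible quotient --- is a distinct component of the Howe duality conjecture, and the standard proofs (Waldspurger via the MVW involution for odd residual characteristic, later Gan--Takeda in general) must establish it separately, typically by comparing $\Theta_{\psi}(\pi)$ and $\Theta_{\psi}(\sigma)$ against each other in both directions. Part (3) likewise relies on the full duality statement in the reversed direction rather than on multiplicity one alone, though the formal reduction you sketch is fine once that is granted. Your closing observation --- that for the anisotropic pair $(O(E),Sp(2,F))$ with $SO(E)\cong E^1$ compact, the restriction of $\tilde{\omega}_{\psi}$ decomposes discretely into the isotypics $\mathcal{S}(E)_{\theta}$, and one can verify irreducibility of $\mathcal{S}(E)_{\theta}$ under $SL(2,F)$ by direct computation in the Schr\"odinger model --- is sound and is essentially what happens in the theorem that follows in the paper; but it is a bypass specific to this dual pair, not a proof of the general Howe duality principle as you set out to give.
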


\begin{defn}[Theta Correspondence]
 Let 
 \[
  Howe_{\psi}(\widetilde{G},\widetilde{G^{\prime}}) = \{ \pi \in \text{Irr}(\widetilde{G})|\theta_{\psi}(\pi) \neq 0\}
 \]
 be the set of (up to isomorphism) irreducible admissible representations of $\widetilde{G}$.
The map:
\[
 \pi\mapsto \theta_{\psi}(\pi)
\]
defines a bijection:
\[
 Howe_{\psi}(\widetilde{G},\widetilde{G^{\prime}}) \xrightarrow{\sim} Howe_{\psi}(\widetilde{G^{\prime}},\widetilde{G})
\]
this bijection is referred to as the \emph{local theta correspondence}.
\end{defn}

We now specialize to the case we are interested in: 

Let $E = F(\delta)$ be a tamely ramified quadratic field extension of $F$, $\delta = \sqrt{\Delta},\ \  \Delta\in F^{\times}/(F^{\times})^2$. The field $E$ can be viewed as a 2-dimensional vector space over $F$ with symmetric bilinear form induced by the norm map:
\[
 N(\alpha_1+\alpha_2\delta) = \alpha_1^2-\Delta\alpha_2^2
\]
with the basis: $\{ 1, \delta\}$, the form is represented by 
$
 \begin{pmatrix} 1&0 \\ 0& -\Delta \end{pmatrix}
$

Let $W$ be a 2-dimensional symplectic vector space over $F$ with basis $w_1,w_2$ and skew-symmetric form (after choosing basis) represented by
$ \begin{pmatrix}
  0&1 \\ -1&0
 \end{pmatrix}$

Consider $\mathbb{W} = W\otimes V$ with basis $\{  x_1:= w_1\otimes 1, x_2:= w_1\otimes -\frac{1}{\Delta}\delta, y_1:= w_2\otimes 1, y_2:= w_2\otimes \delta\} $, the skew-symmetric bilinear form is represented by 
\[
 \begin{pmatrix}
  0&0&1&0 \\ 0&0&0&1 \\ -1&0&0&0 \\ 0&-1&0&0
 \end{pmatrix}
\]

 Let $E^1 = \{ \alpha\in E| N(\alpha) = 1\}$. $E^1$ acts on $E$ by multiplication on the left, it's easy to check that $E^1\cong SO(E)$ by map: 
\[
 \alpha_1+\alpha_2\delta\mapsto \begin{bmatrix} \alpha_1&\alpha_2 \\ \alpha_2\Delta & \alpha_1 \end{bmatrix}
\]
and note that $Sp(2)\cong SL(2,F)$. By the theta correspondence and the Schr\"odinger model of Weil representation, we have the following result:

\begin{thm}
 Suppose $\theta$ is a regular character of $E^1$. The corresponding irreducible representation $\pi_{\theta,\psi}$ of $SL(2,F)$ on the space $\mathcal{S}(E)_{\theta} = \{ \varphi \in \mathcal{S}(E)| \varphi(\alpha\cdot \mu) = \theta(\mu)\varphi(\alpha),\ \forall \mu\in E^1,\ \forall \alpha\in E\}$ is described as follows:
 \[
  \pi_{\theta,\psi}\begin{bmatrix} a&0 \\ 0&a^{-1}\end{bmatrix}\cdot \varphi(\alpha) = (a,\Delta)\cdot |a|\cdot \varphi(a \alpha)
 \]
 \[
  \pi_{\theta,\psi}\begin{bmatrix} 1&b \\ 0&1\end{bmatrix}\cdot \varphi(\alpha) = \psi(b\cdot N(\alpha))\cdot \varphi(\alpha)
 \]
\[
 \pi_{\theta,\psi}\begin{bmatrix} 0&1 \\ -1&0 \end{bmatrix} \cdot \varphi(\alpha) = \gamma(\Delta,\psi)\hat{\varphi}(\alpha)
\]
where $\hat{\varphi} = \int_E\psi_E(z\alpha^{\sigma})\varphi(z)\ dz$ is the $\sigma$-twisted Fourier transform. See the Appendix for definitions of $\gamma$ and $(a,\Delta)$.
\end{thm}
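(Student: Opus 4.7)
The plan is to obtain the three formulas by substituting the three standard generators $n(b),\ d(a),\ J$ of $Sp(W)\cong SL(2,F)$ into the Segal--Shale--Weil formula stated above, and then restrict to the $\theta$-equivariant subspace $\mathcal{S}(E)_{\theta}$.

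First I would fix the Lagrangian decomposition $\mathbb{W}=\mathbb{X}\oplus\mathbb{Y}$ with $\mathbb{X}=V\otimes Fw_1$ and $\mathbb{Y}=V\otimes Fw_2$, and identify $\mathcal{S}(\mathbb{X})\cong\mathcal{S}(E)$ via $w_1\otimes v\mapsto v$. Any $g\in Sp(W)$ with matrix $\bigl(\begin{smallmatrix}a&b\\ c&d\end{smallmatrix}\bigr)$ in the basis $w_1,w_2$ acts on $\mathbb{W}$ through the second tensor slot, giving the block form $\bigl(\begin{smallmatrix}aI&bI\\ cI&dI\end{smallmatrix}\bigr)$ on $\mathbb{X}\oplus\mathbb{Y}$. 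The crucial point is that, in the basis chosen in the paper, the pairing $\llg x,y\rrg$ between $\mathbb{X}\cong E$ and $\mathbb{Y}\cong E$ coincides with the symmetric bilinear form $\beta$ on $E$ induced by the norm, and $\beta$ satisfies $\beta(u,v)=\tfrac12\mathrm{tr}_{E/F}(uv^{\sigma})$ and $\beta(v,v)=N(v)$; it is precisely this identity that produces the Galois twist in the Fourier kernel.

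For the unipotent $n(b)$ the lower-left block is $0$, so the integration domain collapses and the remaining character becomes $\psi(b\,N(x))$, giving the second formula. For the diagonal $d(a)$ the lower-left block is again $0$; one reads off $\varphi(a\alpha)$ multiplied by the normalizing factor $m(d(a))$, which by the explicit formula in the Appendix equals $(a,\Delta)\,|a|$ (the Hilbert symbol arising as the Weil index of the dilated norm form, the $|a|$ from the rescaling of Haar measure on $\mathbb{Y}$). For the Weyl element $J$ the lower-left block is $-I$, so its kernel is trivial and the integral runs over all of $\mathbb{Y}\cong E$; after rewriting $\beta$ via $\tfrac12\mathrm{tr}(\cdot\,(\cdot)^{\sigma})$ the integral assumes the form $\int_E \psi_E(z\alpha^{\sigma})\varphi(z)\,dz$, and the constant $m(J)$ contributes exactly the Weil index $\gamma(\Delta,\psi)$ of the norm form.

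Finally, since $Sp(W)$ and $O(V)$ are mutual centralizers in $Sp(\mathbb{W})$ and the metaplectic cover splits over $Sp(W)$ when $\dim V$ is even, the $Sp(W)$-action on $\mathcal{S}(E)$ commutes with the $E^1\subset O(V)$-action, so it preserves the $\theta$-isotypic subspace $\mathcal{S}(E)_{\theta}$ and yields $\pi_{\theta,\psi}$. The hard part is pure bookkeeping: one must track the normalizing constant $m(g)$ from the Appendix very carefully---including through the change of polarization one needs to handle $J$---in order to extract the Hilbert symbol $(a,\Delta)$ and the Weil constant $\gamma(\Delta,\psi)$ with the correct signs, since a stray scalar in $m(g)$ would contaminate all three generator formulas.
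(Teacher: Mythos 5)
Your high-level plan---plug the three generators into the Schr\"odinger model and restrict to $\mathcal{S}(E)_\theta$---is the same as the paper's, but you misidentify the source of the Hilbert symbol $(a,\Delta)$, and this is precisely the subtle part of the argument. You write that the factor $(a,\Delta)\,|a|$ ``equals the normalizing factor $m(d(a))$.'' It does not. For $\iota(d(a))\in P$, the function $f$ of the Appendix gives $f(\iota(d(a)))=\det\bigl(\iota(d(a))|_Y\bigr)=a^{\pm2}\in(F^\times)^2$, so $\gamma_F(f,\tfrac12\psi)=1$ and $m(\iota(d(a)))=1$; the normalizing constant contributes nothing here. The Hilbert symbol comes instead from the metaplectic \emph{splitting} $s:SL(2,F)\hookrightarrow\widetilde{SL}(2,F)$, $s(g)=(\iota(g),\,m(\iota(g))^{-1}\beta_V(g))$, with Kudla's function $\beta_V$; the paper computes $s(d(a))=(\iota(d(a)),(a,\Delta))$ and $s(J)=(\iota(J),(-1,\Delta))$ and it is the $\epsilon$-coordinate of $s(g)$, not $m(g)$, that multiplies the Schr\"odinger-model operator. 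Your proposal never introduces this section; you only mention the existence of a splitting (as a reason the $\theta$-isotypic is preserved), but the whole content of the proof is pinning down \emph{which} splitting and evaluating it on generators. Without $\beta_V$ you cannot recover $(a,\Delta)$ or $(-1,\Delta)$, and nothing in $m(g)$ alone produces them.

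A secondary inaccuracy: you assert that $g=\bigl(\begin{smallmatrix}a&b\\c&d\end{smallmatrix}\bigr)$ acts on $\mathbb{W}=V\otimes W$ with block form $\bigl(\begin{smallmatrix}aI&bI\\cI&dI\end{smallmatrix}\bigr)$ ``in the basis chosen in the paper.'' The paper deliberately uses \emph{asymmetric} bases, $\{\,w_1\otimes 1,\;w_1\otimes(-\tfrac1\Delta\delta)\,\}$ for $\mathbb{X}$ and $\{\,w_2\otimes 1,\;w_2\otimes\delta\,\}$ for $\mathbb{Y}$, precisely so that $\llg\,,\rrg$ becomes the standard $\bigl(\begin{smallmatrix}0&I\\-I&0\end{smallmatrix}\bigr)$; in this basis the $B$ and $C$ blocks of $\iota(1\times g)$ are $\mathrm{diag}(b,-b/\Delta)$ and $\mathrm{diag}(c,-\Delta c)$, not $bI$ and $cI$. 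This matters when you feed $\iota(g)$ into the explicit operator formulas: the integral defining the action of $J$ and the $\psi(\tfrac12\lgl x,xB\rgl)$ factor for $n(b)$ both depend on these off-diagonal blocks, and carrying the identity there would not reproduce $\psi(b\,N(\alpha))$ or the $\sigma$-twisted kernel.

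So: same overall strategy as the paper, but the proposal as written would not yield the stated formulas. You need (i) the correct embedding $\iota$ in the paper's basis, and (ii) the explicit splitting $s$ via $\beta_V$, whose $\pm1$-values on $d(a)$ and $J$ are exactly the Hilbert symbols appearing in the theorem.
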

\begin{proof}
 It is well known that the Metaplectic cover $Mp(4)$ splits over $SO(2)$. It also splits over $SL(2,F)$ because the orthogonal vector space $V$ has even dimension. Let's first describe the explicit embedding of $SO(2)\cong E^1$ in $\widetilde{O}(E)$ and $SL(2,F)$ in $\widetilde{SL}(2,F)$.
 Define:
 \[
 \iota: O(E)\times Sp(2)\rightarrow Sp(4)
\]
\[
 \begin{bmatrix} t&u\\r&s\end{bmatrix}\times \begin{bmatrix} a&b\\c&d \end{bmatrix} \mapsto \begin{bmatrix} at & -\Delta au&bt&bu \\ -\frac{ar}{\Delta}&as&-\frac{br}{\Delta}&-\frac{bs}{\Delta}\\ct&-\Delta cu&dt&du\\ cr&-\Delta cs&dr&ds\end{bmatrix}
\]
  For the convenience of future reference, we also write down the embeddings:
 \[
 \iota: O(E)\times 1\rightarrow Sp(4)
\]
\[
 \begin{bmatrix} t&u\\r&s\end{bmatrix}\times \begin{bmatrix} 1&0\\0&1 \end{bmatrix} \mapsto \begin{bmatrix} t & -\Delta u&0&0 \\ -\frac{r}{\Delta}&s&0&0\\0&0&t&u\\ 0&0&r&s\end{bmatrix}
\]

\[
 \iota: 1\times Sp(2)\rightarrow Sp(4)
\]
\[
 \begin{bmatrix} 1&0\\0&1\end{bmatrix}\times \begin{bmatrix} a&b\\c&d \end{bmatrix} \mapsto \begin{bmatrix} a & 0&b&0 \\ 0&a&0&-\frac{b}{\Delta}\\c&0&d&0\\ 0&-\Delta c&0&d\end{bmatrix}
\]
The embedding $E^1\hookrightarrow \widetilde{O}(E)$ is:
\[
 \alpha_1+\alpha_2\delta\mapsto \begin{bmatrix} \alpha_1&\alpha_2 \\ \alpha_2\Delta & \alpha_1 \end{bmatrix}\mapsto \left(\iota(\begin{bmatrix} \alpha_1&\alpha_2 \\ \alpha_2\Delta & \alpha_1 \end{bmatrix}), 1\right)
\]
It is a straightforward calculation that this is a group homomorphism given the formulas in \cite{RR}.

To obtain the embedding $SL(2,F)\hookrightarrow \widetilde{SL}(2,F)$, we look at \cite{KudlaSplitting}: 
\[
 c(\iota(g_1),\iota(g_2)) = \beta_V(g_1g_2)\beta_V(g_1)^{-1}\beta_V(g_2)^{-1}
\]
where 
\[
 \beta_V(g) = \gamma(x(g),\frac{1}{2}\psi))^{-m}(x(g),\det(V))\gamma(\frac{1}{2}\psi\circ V)^{-j}
\]
$m=\dim V$, $g$ is in the j-th cell $P\tau_j P$. 

The embedding $s: SL(2,F)\hookrightarrow \widetilde{SL}(2,F)$ is defined as follows:
\[
 s(g) = (\iota(g), m(\iota(g))^{-1}\beta_V(g))
\]
It is easily verified that this is a group homomorphism. One important thing to verify is that $m(\iota(g))^{-1}\beta_V(g)$ takes $\pm 1$ value. Further calculation shows:
\[
 s: \begin{bmatrix}a&b\\0&a^{-1}\end{bmatrix} \mapsto \left(\iota(\begin{bmatrix}a&b\\0&a^{-1}\end{bmatrix}), (a,\Delta)\right)
\]
\[
 s: \begin{bmatrix} 0&1\\ -1&0\end{bmatrix} \mapsto \left(\iota(\begin{bmatrix} 0&1\\ -1&0\end{bmatrix}), (-1,\Delta)\right)
\]
generate this embedding. 

It is an elementary consequence of the theory of the theta correspondence that the restriction $\widetilde{\omega}_{\psi}$ to $SO(2)\times SL(2,F)$ can be written as direct sum:
\[
 \sum_i \theta_i \boxtimes \pi_i
\]
where $\theta_i$ is a character of $SO(2)$, and $\pi_i$ is an irreducible representation of $SL(2,F)$.

Let's first determine the irreducible subspace on which $SO(2)$ acts with the character $\theta$, i.e., the $\theta$-isotypic:
\[
 \mathcal{S}(E)_{\theta} = \{ \varphi\in \mathcal{S}(E)| \varphi (\alpha\cdot \mu) = \theta(\mu)\varphi(\alpha),\ \forall \mu \in E^1, \ \forall \alpha\in E\}
\]
The way $SL(2,F)$ acts on $\mathcal{S}(E)_{\theta}$ is completely determined by the embedding $s$ and the Weil representation $\widetilde{\omega}_{\psi}$:
\[
 \pi_{\theta,\psi}\begin{bmatrix}a&b\\c&d\end{bmatrix}\cdot \varphi(\alpha) = \widetilde{\omega}_{\psi}\left(s(\begin{bmatrix}a&b\\c&d\end{bmatrix})\right)
\]
Formulas for the Schr\"odinger model of $\widetilde{\omega}_{\psi}$ is given in the Appendix.
The Theorem then follows by simple calculation.

\end{proof}

\begin{rmk}
 Notice that this correspondence is 2-to-1, because $\theta$ and $\theta^{\sigma}$ both corresponds to isomorphic $SL(2,F)$ representations. Fortunately, this won't get in the way of constructing the Langlands correspondence, since later we will be using the full information of $\Theta$ to construct a representation of $GL(2,F)$. $\Theta$ and $\Theta^{\sigma}$ will correspond to different representations of $GL(2,F)$.
\end{rmk}

\section{The Langlands Correspondence}

Let $(E/F,\Theta)$ be an admissible pair, we have constructed an irreducible representation $\pi_{\theta,\psi}$ of $SL(2,F)$, now we would like to inflate this representation to a representation of $GL(2,F)$.

\begin{defn}
 Let the central extension of $SL(2,F)$ be 
 \[
  GL(2,F)_{\circ} = SL(2,F) Z
 \]
where 
\[
 Z = \{ \begin{bmatrix} x&0 \\ 0&x \end{bmatrix} |x\in F^{\times}\}
\]
i.e., $GL(2,F)_{\circ} = \{ g\in GL(2,F)| \det(g) \in (F^{\times})^2\}$
\end{defn}

To extend $\pi_{\theta,\psi}$, we need to know a little more about the Langlands correspondence.
\begin{prop}
 If $\boldsymbol{\pi}$ is the Langlands correspondence in Definition~\ref{locallanglands}, then for $\rho\in \bg_2^0(F)$ and $\pi = \boldsymbol{\pi}(\rho)$, $\chi_{\pi} = \det \rho$. $\chi_{\pi}$ is the central character of $\pi$.
\end{prop}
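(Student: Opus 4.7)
The plan is to deduce the equality $\chi_\pi = \det\rho$ purely from the defining $\epsilon$-factor condition (\ref{epsiloncondition}), exploiting the behavior of these local constants under twisting by highly ramified characters.

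First I would translate both sides to a common language. By local class field theory the 1-dimensional representation $\det\rho$ of $\mathcal{W}_F$ corresponds to a unique character $\omega$ of $F^\times$, and the goal becomes $\omega = \chi_\pi$. For any character $\chi$ of $F^\times$ the twist $\chi\otimes \rho$ has determinant $\chi^2\omega$ while the central character of $\chi\pi = \pi\otimes(\chi\circ\det)$ is $\chi^2\chi_\pi$; thus twisting is compatible with the assertion, and the real content is a ``constant term'' identification between the two sides.

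Next I would invoke the stability of $\epsilon$-factors. On the Galois side, the Deligne--Henniart stability formula asserts that once $\chi$ has Artin conductor much larger than that of $\rho$,
$$\epsilon(\chi\otimes \rho, s, \psi) \;=\; \omega(c_{\chi,\psi})\cdot \epsilon(\chi,s,\psi)^2,$$
for an explicit element $c_{\chi,\psi}\in F^\times$ depending only on a chosen generator of the conductor of $\chi$ and on $\psi$. On the $GL(2,F)$ side, the analogous stability result (proved via the Kirillov model, see \cite{BH}) gives, in the same range of $\chi$,
$$\epsilon(\chi\pi, s, \psi) \;=\; \chi_\pi(c_{\chi,\psi})\cdot \epsilon(\chi,s,\psi)^2,$$
with precisely the same element $c_{\chi,\psi}$.

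Finally I would combine the two formulas. By (\ref{epsiloncondition}) the left-hand sides agree, so $\omega(c_{\chi,\psi}) = \chi_\pi(c_{\chi,\psi})$ for every $\chi$ of sufficiently large conductor and every nontrivial $\psi$. Varying $\chi$ and $\psi$, the elements $c_{\chi,\psi}$ run through a subset of $F^\times$ which topologically generates $F^\times$, forcing $\omega = \chi_\pi$ as characters of $F^\times$. I expect the main obstacle to be the stability formula on the $GL(2,F)$ side, since one must control $\epsilon(\chi\pi,s,\psi)$ for cuspidal $\pi$ through a direct Kirillov-model computation; the remainder is essentially bookkeeping with (\ref{epsiloncondition}).
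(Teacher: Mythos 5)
The paper states this proposition without proof, implicitly deferring to \cite{BH} (it is part of the characterization of $\boldsymbol{\pi}$ via $L$- and $\epsilon$-conditions, treated in \cite{BH} chapters 23--25 and 34). Your reconstruction via stability of local constants under highly ramified twists is precisely the argument in \cite{BH}: the Deligne--Henniart stability theorem on the Galois side, the Kirillov-model stability computation on the $GL(2)$ side, and the observation that the same $c_{\chi,\psi}$ appears on both sides once the conductor of $\chi$ dominates, after which varying $\chi$ and $\psi$ separates points of $F^\times$. The only thing to be slightly careful about is the exact normalization in the stability formulas (the gauge/conductor bookkeeping so that the two $c_{\chi,\psi}$'s literally coincide rather than differ by a unit controlled by $n(\psi)$ and the level of $\chi$), but this is handled in \cite{BH} and does not change the conclusion; your approach is correct and is the intended one.
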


\begin{lemma}
 \[
  \det \rho_{\Theta} = \kappa\otimes \Theta|_F
 \]
where $\kappa(x) = (x,\Delta)$, $\rho_{\Theta} = \text{Ind}_{\mathcal{W}_E}^{\mathcal{W}_F}\Theta$
\end{lemma}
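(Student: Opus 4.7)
The plan is to compute $\det\rho_\Theta$ directly from an explicit matrix realization of the induced representation and then translate the resulting character of $\mathcal{W}_F$ into a character of $F^\times$ via the Artin map, using the standard compatibilities of local class field theory.

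First I would fix an element $\sigma\in \mathcal{W}_F$ whose image generates $\mathrm{Gal}(E/F)$, and use $\{1,\sigma\}$ as coset representatives for $\mathcal{W}_F/\mathcal{W}_E$. In the induced model with basis $\{e_1,e_2\}=\{1,\sigma\}$, one checks that for $w\in\mathcal{W}_E$
\[
\rho_\Theta(w)=\begin{pmatrix}\Theta(w)&0\\ 0&\Theta(\sigma^{-1}w\sigma)\end{pmatrix},\qquad
\rho_\Theta(\sigma)=\begin{pmatrix}0&\Theta(\sigma^2)\\ 1&0\end{pmatrix}.
\]
Taking determinants yields $\det\rho_\Theta(w)=\Theta(w)\Theta^\sigma(w)$ on $\mathcal{W}_E$ and $\det\rho_\Theta(\sigma)=-\Theta(\sigma^2)$. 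Since $\mathcal{W}_F$ is generated by $\mathcal{W}_E$ together with $\sigma$, these two formulas determine $\det\rho_\Theta$ completely.

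Next I would transport everything to $F^\times$ via the Artin map. For $w\in\mathcal{W}_E$ with $\mathbf{a}_E(w)=\alpha\in E^\times$, the compatibility $\mathbf{a}_F(w)=N_{E/F}(\alpha)$ gives
\[
\Theta(w)\Theta^\sigma(w)=\Theta(\alpha)\Theta(\alpha^\sigma)=\Theta(N_{E/F}(\alpha))=\Theta|_F(\mathbf{a}_F(w)).
\]
By local class field theory, $\kappa$ has kernel $N_{E/F}(E^\times)$, so $\kappa(\mathbf{a}_F(w))=1$ for $w\in\mathcal{W}_E$, and thus $\det\rho_\Theta=\kappa\cdot\Theta|_F$ on $\mathcal{W}_E$. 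For the coset represented by $\sigma$, the transfer/Verlagerung compatibility of the Artin map for the index-$2$ subgroup $\mathcal{W}_E\subset\mathcal{W}_F$ reads $\mathbf{a}_E(\sigma^2)=\mathbf{a}_F(\sigma)\in F^\times\subset E^\times$, so
\[
\det\rho_\Theta(\sigma)=-\Theta(\sigma^2)=-\Theta|_F(\mathbf{a}_F(\sigma))=\kappa(\mathbf{a}_F(\sigma))\cdot\Theta|_F(\mathbf{a}_F(\sigma)),
\]
where we used $\kappa(\mathbf{a}_F(\sigma))=-1$ since $\mathbf{a}_F(\sigma)\notin N_{E/F}(E^\times)$. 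Combining the two cases gives the equality of characters $\det\rho_\Theta=\kappa\otimes\Theta|_F$ on all of $\mathcal{W}_F$.

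The main obstacle is bookkeeping rather than substance: one must keep straight the two different compatibilities of Artin maps for the inclusion $\mathcal{W}_E\subset\mathcal{W}_F$, namely that the restriction of $\mathbf{a}_F$ to $\mathcal{W}_E$ corresponds to $N_{E/F}$, while the inclusion $F^\times\hookrightarrow E^\times$ corresponds to the transfer $w\mapsto\sigma^2$ (on our chosen coset representative). Once these two dualities are invoked correctly, the identification of $\kappa$ with the nontrivial quadratic character of $F^\times/N_{E/F}E^\times$ via the Hilbert-symbol formula $\kappa(x)=(x,\Delta)$ finishes the proof.
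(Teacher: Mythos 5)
The paper states this lemma without proof; your argument, via the explicit $2\times 2$ matrix model of the induced representation together with the two standard Artin-map compatibilities (restriction of $\mathbf{a}_F$ to $\mathcal{W}_E$ corresponds to $N_{E/F}\circ\mathbf{a}_E$, and transfer $\mathcal{W}_F^{ab}\to\mathcal{W}_E^{ab}$ corresponds to the inclusion $F^{\times}\hookrightarrow E^{\times}$), is a complete and correct proof. It is exactly the hands-on derivation of the general formula $\det(\mathrm{Ind}_H^G\chi)=\varepsilon_{G/H}\cdot(\chi\circ\mathrm{Ver})$ that one would expect here, so it matches the intended (implicit) argument.
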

Now we have enough information to extend the representation $\pi_{\theta,\psi}$ to $GL(2,F)_{\circ}$.
\begin{defn}
 \[
  \pi(\Theta,\psi)_{\circ} \begin{bmatrix}x&0\\0&x \end{bmatrix} \cdot \varphi(\alpha) = \kappa(x) \Theta(x) \varphi(\alpha)
 \]
is an irreducible representation of $GL(2,F)_{\circ}$ on $\mathcal{S}(E)_{\theta}$.
\end{defn}
Consider short exact sequence:
\[
 0\rightarrow GL(2,F)_{\circ}\hookrightarrow GL(2,F)\xrightarrow{\det}F^{\times}/(F^{\times})^2\rightarrow 0
\]
To go from $GL(2,F)_{\circ}$ to $GL(2,F)$, we consider elements in $GL(2,F)$ whose determinant is a norm in $F^{\times}$:
\begin{defn}
 \[
  GL(2,F)_N = \{ g\in GL(2,F)| \det(g)\in N(E^{\times})\}
 \]
\end{defn}
$GL(2,F)_N$ can be generated by the group $GL(2,F)_{\circ}$ and $N(E^{\times})\cong \{ \begin{bmatrix} \gamma & 0 \\ 0& 1\end{bmatrix} | \gamma\in N(E^{\times})\}$ Let's extend the representation to $\begin{bmatrix} \gamma & 0 \\ 0& 1\end{bmatrix}$. Let $\pi_{\Theta,\psi}$ denote the representation of $GL(2,F)_N$ extended from $\pi(\Theta,\psi)_{\circ}$. Automatically, $\pi_{\Theta,\psi}$ has to satisfy:
\begin{equation*}
\begin{split}
 \pi_{\Theta,\psi}\begin{bmatrix} \gamma &0\\0&1\end{bmatrix}\cdot \begin{bmatrix} \gamma &0\\0&1\end{bmatrix}\cdot \varphi(\alpha) &= \pi(\Theta,\psi)_{\circ} \begin{bmatrix} \gamma^2 &0\\0&1\end{bmatrix}\cdot \varphi(\alpha)\\
 &=\Theta(\zeta^2)\cdot |\gamma|\cdot \varphi(\sigma(\zeta^2)\alpha)
\end{split}
\end{equation*}
Here $\zeta\in E$ is defined by $N(\zeta) = \gamma$.
\begin{defn}
 \[
  \pi_{\Theta,\psi}\begin{bmatrix} \gamma &0\\0&1\end{bmatrix} \cdot \varphi(\alpha) = \Theta(\zeta)\cdot |\gamma|^{\frac{1}{2}}\cdot \varphi(\sigma(\zeta)\alpha), \ \ \forall \gamma\in N(E^{\times})
 \]
 \[
  \pi_{\Theta,\psi} \begin{bmatrix}x&0\\0&x \end{bmatrix} \cdot \varphi(\alpha) = (x,\Delta) \Theta(x) \varphi(\alpha), \ \ \forall x\in F^{\times}
 \]
 together with the action of $SL(2,F)$ via $\pi_{\theta,\psi}$ defines a representation of $GL(2,F)_N$.
\end{defn}
\begin{rmk}
 This extension is independent of the choice of $\zeta$ since if we choose another element with norm $\alpha$, it is going to be of the form $\zeta\mu$, $\mu\in E^1$. 
 \begin{equation*}
  \begin{split}
   \Theta(\zeta\mu)|\gamma|^{\frac{1}{2}}\varphi(\sigma(\zeta)\sigma(\mu)\alpha) &= \Theta(\zeta)\theta(\mu)|\gamma|^{\frac{1}{2}}\theta(\sigma(\mu))\varphi(\sigma(\zeta)\alpha) \\
   &= \Theta(\zeta)\theta(N(\mu))|\gamma|^{\frac{1}{2}}\varphi(\sigma(\zeta)\alpha) =\Theta(\zeta)|\gamma|^{\frac{1}{2}}\varphi(\sigma(\zeta)\alpha)
  \end{split}
 \end{equation*}
 \end{rmk}
Finally, we would like to obtain an irreducible representation of $GL(2,F)$ from $\pi_{\Theta,\psi}$. Since $GL(2,F)_N$ is an index 2 subgroup of $GL(2,F)$, we would like to induce $\pi_{\Theta,\psi}$. Note that the non-trivial coset of $GL(2,F)_N$ in $GL(2,F)$ is represented by matrix $\begin{bmatrix}\eta& 0 \\ 0&1\end{bmatrix}$, where $\eta\in F^{\times}\backslash N(E^{\times})$.
\begin{lemma}
 For $t\in F^{\times}$, $\pi_{\theta,\psi}^t\cong \pi_{\theta,\psi_t}$.
\end{lemma}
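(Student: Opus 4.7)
The plan is to verify on a generating set of $SL(2,F)$ that $\pi_{\theta,\psi}^t(g)$ and $\pi_{\theta,\psi_t}(g)$ agree; in fact I will aim for the stronger statement that the two representations are literally equal on the space $\mathcal{S}(E)_{\theta}$ (which is independent of $\psi$), so the claimed isomorphism is witnessed by the identity map. Here I interpret $\pi_{\theta,\psi}^t$ as conjugation by $h:=\begin{bmatrix} t & 0 \\ 0 & 1 \end{bmatrix}\in GL(2,F)$, i.e.\ $\pi_{\theta,\psi}^t(g)=\pi_{\theta,\psi}(hgh^{-1})$. By the Bruhat decomposition it suffices to check the three standard generators $\mathrm{diag}(a,a^{-1})$, $\begin{bmatrix} 1 & b \\ 0 & 1 \end{bmatrix}$, and the Weyl element $w:=\begin{bmatrix} 0 & 1 \\ -1 & 0 \end{bmatrix}$.

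For the Borel part this is immediate. Conjugation fixes $\mathrm{diag}(a,a^{-1})$ and sends $\begin{bmatrix} 1 & b \\ 0 & 1 \end{bmatrix}$ to $\begin{bmatrix} 1 & tb \\ 0 & 1 \end{bmatrix}$, so plugging into the formulas of the preceding theorem gives the torus action unchanged and the unipotent action equal to $\psi(tb\,N(\alpha))\varphi(\alpha)=\psi_t(b\,N(\alpha))\varphi(\alpha)$, matching $\pi_{\theta,\psi_t}$ exactly.

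The nontrivial step is the Weyl element. I would use the decomposition $hwh^{-1}=w\cdot\mathrm{diag}(t^{-1},t)$ and apply the torus factor first, which contributes $(t,\Delta)\,|t|^{-1}$ and replaces $\varphi(\alpha)$ by $\varphi(t^{-1}\alpha)$. Then applying $\pi_{\theta,\psi}(w)$ and changing variables $z\mapsto tz$ in the $\sigma$-twisted Fourier integral converts the kernel $\psi_E(z\alpha^{\sigma})$ into $(\psi_t)_E(z\alpha^{\sigma})$ and produces a Jacobian $|t|_{E}=|t|_F^{2}$. Collecting everything leaves the coefficient $(t,\Delta)\,|t|\,\gamma(\Delta,\psi)$ in front of $\int_E (\psi_t)_E(z\alpha^{\sigma})\varphi(z)\,dz$ (with respect to the $\psi$-self-dual measure $dz$, not the $\psi_t$-self-dual one). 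A parallel unwinding of $\pi_{\theta,\psi_t}(w)\varphi$ produces the coefficient $|t|\,\gamma(\Delta,\psi_t)$ in front of the same integral, because the $\psi_t$-self-dual measure on $E$ equals $|t|\,dz$.

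Equality on $w$ therefore reduces to the Weil-index identity
\[
\gamma(\Delta,\psi_t)=(t,\Delta)\,\gamma(\Delta,\psi),
\]
which is the standard transformation law for the local Weil factor under $\psi\mapsto\psi_t$ (recorded, for instance, in the appendix formulas one would cite from \cite{RR}). The main obstacle is the bookkeeping of the various $|t|$-factors: one from the change of variable in the Fourier integral, one from the diagonal formula, and one from the difference between the $\psi$- and $\psi_t$-self-dual Haar measures on $E$. Once these are reconciled, the Weil-index formula makes the Hilbert symbol $(t,\Delta)$ appearing from the torus contribution exactly cancel against the ratio $\gamma(\Delta,\psi_t)/\gamma(\Delta,\psi)$, giving literal equality on $w$ and completing the verification on generators.
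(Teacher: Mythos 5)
Your proposal is correct and is exactly the argument the paper has in mind---the paper itself only remarks that the lemma ``is easy to prove from the formulas,'' and you carry out that computation explicitly. You check the three Bruhat generators, reduce the Weyl-element case to the transformation law $\gamma(\Delta,\psi_t)=(t,\Delta)\,\gamma(\Delta,\psi)$, and keep careful track of the $|t|$-factors coming from the torus action, the change of variables, and the shift between the $\psi$- and $\psi_t$-self-dual measures on $E$; in fact you obtain the stronger statement that the two representations are \emph{equal} on $\mathcal{S}(E)_\theta$, not merely isomorphic.
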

Here $\pi_{\theta,\psi}^t$ is defined as 
\[
 \pi_{\theta,\psi}^t(A) = \pi_{\theta,\psi}(TAT^{-1})
\]
where $T = \begin{bmatrix}t&0\\0&1\end{bmatrix}$ and $\psi_t(\alpha) = \psi(t\alpha)$. This lemma is easy to prove from the formulas.

\begin{lemma}
 \[
  \pi_{\theta,\psi} \text{ is } \begin{cases} \cong \pi_{\theta,\psi_{t}} &  t\in N(E^{\times}) \\ \ncong \pi_{\theta,\psi_{t}} & t\notin N(E^{\times}) \end{cases}
 \]
\end{lemma}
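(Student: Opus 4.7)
I would handle the two cases by different methods.

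For $t\in N(E^\times)$, the isomorphism comes essentially for free from the extension $\pi_{\Theta,\psi}$ of $\pi_{\theta,\psi}$ to $GL(2,F)_N$ constructed in the previous definition. Setting $T=\begin{bmatrix}t&0\\0&1\end{bmatrix}\in GL(2,F)_N$ and $J:=\pi_{\Theta,\psi}(T)$, the multiplicativity of $\pi_{\Theta,\psi}$ on $GL(2,F)_N$ gives
\[
 J\,\pi_{\theta,\psi}(g)\,J^{-1}\;=\;\pi_{\theta,\psi}(TgT^{-1})\;=\;\pi_{\theta,\psi}^{t}(g)
\]
for every $g\in SL(2,F)$, so $J$ is an $SL(2,F)$-intertwiner $\pi_{\theta,\psi}\to\pi_{\theta,\psi}^{t}$. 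Composing with the previous lemma $\pi_{\theta,\psi}^{t}\cong\pi_{\theta,\psi_t}$ finishes this case. (A more hands-on alternative is to define $I\varphi(\alpha):=\varphi(\zeta\alpha)$ for $\zeta\in E^\times$ with $N(\zeta)=1/t$ and verify the intertwining condition on the three generators of the theorem; the $n(b)$-check uses $tN(\zeta)=1$, and the Weyl-element check is a routine change of variables in the $\sigma$-twisted Fourier transform.)

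For $t\notin N(E^\times)$, I would argue by comparing Whittaker spectra. Let $N=\{n(b):b\in F\}$ and, for $c\in F^\times$, let $\psi_c:n(b)\mapsto\psi(bc)$. The formula $\pi_{\theta,\psi}(n(b))\varphi(\alpha)=\psi(bN(\alpha))\varphi(\alpha)$ shows that evaluation $\lambda_{\alpha_0}(\varphi):=\varphi(\alpha_0)$ is a nonzero $\psi_{N(\alpha_0)}$-Whittaker functional, so $\pi_{\theta,\psi}$ admits a $\psi_c$-Whittaker model for every $c\in N(E^\times)$; the analogous computation for $\pi_{\theta,\psi_t}$ yields $\psi_c$-Whittaker models exactly for $c\in tN(E^\times)$. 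The decisive step is the matching upper bound: any $\psi_c$-Whittaker functional on $\mathcal{S}(E)_\theta$ must annihilate every element of the form $[\psi(bN(\cdot))-\psi(bc)]\varphi$, hence factors through a $\theta$-equivariant distribution supported on the fiber $\{\alpha\in E:N(\alpha)=c\}$; when $c\notin N(E^\times)$ this fiber is empty and the functional vanishes. Thus the Whittaker spectra of $\pi_{\theta,\psi}$ and $\pi_{\theta,\psi_t}$ are the cosets $N(E^\times)$ and $tN(E^\times)$ of $N(E^\times)$ in $F^\times$, which are disjoint precisely when $t\notin N(E^\times)$, and so the two representations cannot be isomorphic.

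The only non-mechanical ingredient is the upper bound in the Whittaker calculation; everything else either follows from the previous lemma and the already constructed extension to $GL(2,F)_N$, or is a verification on the explicit generators. I expect that step to be the main obstacle, and the cleanest formulation to identify the twisted Jacquet module $(\pi_{\theta,\psi})_{N,\psi_c}$ with the space of $\theta$-equivariant distributions on $N^{-1}(c)\subset E$, which vanishes exactly when $c\notin N(E^\times)$.
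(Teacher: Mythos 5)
Your proof is correct, and the two halves take somewhat different routes from the paper's.

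For $t\in N(E^\times)$, your primary argument — conjugating by $J=\pi_{\Theta,\psi}(T)$ — is logically uncomfortable: the claim that $\pi_{\Theta,\psi}$ is a well-defined representation of $GL(2,F)_N$ is precisely the statement that this conjugation relation holds, so the paper's assertion that the extension ``defines a representation of $GL(2,F)_N$'' already encodes the content of this case, and invoking it here is close to circular. Your parenthetical alternative, writing down $I\varphi(\alpha)=\varphi(\zeta\alpha)$ with $tN(\zeta)=1$ and checking the three generators directly, is what the paper actually means by ``writing down an explicit intertwining operator,'' and that is the clean way to argue. (Modulo a possible scalar and a $\zeta$ versus $\sigma(\zeta)$ normalization against the paper's conventions for the $\gamma$-factors, that verification goes through.)

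For $t\notin N(E^\times)$, the paper points to a comparison of character formulas taken from \cite{SS}, whereas you use Whittaker spectra via twisted Jacquet modules. These are genuinely different arguments. Your approach is more self-contained: the $\psi_c$-Whittaker spectrum of $\pi_{\theta,\psi}$ (resp.\ $\pi_{\theta,\psi_t}$) is read off directly from the explicit unipotent action as the coset $N(E^\times)$ (resp.\ $tN(E^\times)$), evaluation at a point $\alpha_0$ with $N(\alpha_0)=c$ supplying existence, and the emptiness of the fiber $N^{-1}(c)$ for $c$ outside the right coset supplying the vanishing. The step you flag as the ``only non-mechanical ingredient'' — identifying $(\pi_{\theta,\psi})_{N,\psi_c}$ with $\theta$-equivariant distributions supported on $N^{-1}(c)$, hence zero when $c\notin N(E^\times)$ — is a standard twisted-Jacquet-module computation and is correct. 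This buys you an argument that never leaves the explicit Schr\"odinger model and requires no external character computation; the paper's route instead outsources the hard part to \cite{SS}. Either is legitimate, but yours would make the paper more self-contained.
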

 The fact $\pi_{\theta,\psi}\cong \pi_{\theta,\psi_t}$ when $t\in N(E^{\times})$ is easy to prove from the formulas by writing down an explicit intertwining operator. While the second part can be observed from the difference in their character formulas. Such character formulas are presented in \cite{SS}.

\begin{prop}\label{repofglNtwistbyt}
\[
 \pi_{\Theta,\psi} \text{ is } \begin{cases} \cong \pi_{\Theta,\psi_{t}} &  t\in N(E^{\times}) \\ \ncong \pi_{\Theta,\psi_{t}} & t\notin N(E^{\times}) \end{cases}
 \]
\end{prop}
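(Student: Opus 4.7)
The plan is to split by whether $t\in N(E^{\times})$ or not. The non-isomorphism direction is a restriction argument: any $GL(2,F)_N$-isomorphism $\pi_{\Theta,\psi}\cong \pi_{\Theta,\psi_t}$ would restrict to an $SL(2,F)$-isomorphism $\pi_{\theta,\psi}\cong \pi_{\theta,\psi_t}$, since by construction the restrictions of $\pi_{\Theta,\psi}$ and $\pi_{\Theta,\psi_t}$ to $SL(2,F)$ are literally $\pi_{\theta,\psi}$ and $\pi_{\theta,\psi_t}$. The preceding lemma forbids the existence of such an $SL(2,F)$-isomorphism whenever $t\notin N(E^{\times})$, giving the $\ncong$ half of the proposition.

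For the isomorphism direction when $t\in N(E^{\times})$, pick $\zeta_0\in E^{\times}$ with $N(\zeta_0) = t^{-1}$ and define $T\colon \mathcal{S}(E)_{\theta}\to \mathcal{S}(E)_{\theta}$ by $T\varphi(\alpha) = \varphi(\zeta_0\alpha)$. This $T$ preserves the $\theta$-isotypic space because $E$ is commutative, so right multiplication by $E^{1}$ commutes with left multiplication by $\zeta_0$. By the computation underlying the preceding $SL(2,F)$-level lemma, the same $T$ already intertwines $\pi_{\theta,\psi_t}$ with $\pi_{\theta,\psi}$: the substitution $\alpha\mapsto \zeta_0\alpha$ converts $\psi_t(bN(\alpha))$ into $\psi(tbN(\zeta_0)N(\alpha)) = \psi(bN(\alpha))$ for the unipotent generator; the diagonal torus action is $\psi$-independent and commutes with $T$ trivially; and the Weyl element follows from a change of variables inside the $\sigma$-twisted Fourier transform paired with the appropriate identity between $\gamma(\Delta,\psi)$ and $\gamma(\Delta,\psi_t)$ when $t$ is a norm from $E^{\times}$.

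It remains only to check that the same $T$ intertwines the two extra generator families of $GL(2,F)_N$, namely $\begin{bmatrix}x&0\\0&x\end{bmatrix}$ for $x\in F^{\times}$ and $\begin{bmatrix}\gamma&0\\0&1\end{bmatrix}$ for $\gamma = N(\zeta)\in N(E^{\times})$. The defining formulas for these actions in $\pi_{\Theta,\psi}$ are entirely $\psi$-free, so the verification reduces to commuting the left multiplications by $\zeta_0$, $x$, and $\sigma(\zeta)$ past one another inside the argument $\alpha$, which is automatic in the commutative field $E$. Thus the real obstacle, namely the Weyl-element verification involving the Weil index, has already been absorbed into the preceding $SL(2,F)$-level lemma; the $\psi$-independence built into the extension from $SL(2,F)$ up to $GL(2,F)_N$ is precisely what makes the present proposition a formal consequence of that lemma.
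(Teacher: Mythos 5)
Your proposal is correct, and it supplies precisely the detail that the paper elides behind the phrase \emph{``this is an elementary consequence of the previous lemmas.''} The $\ncong$ direction via restriction to $SL(2,F)$ is the obvious argument and is certainly what the authors had in mind. Your treatment of the $\cong$ direction is also right, and it is worth noting why it is not \emph{purely} formal: knowing that the two restrictions to $SL(2,F)$ are isomorphic does not by itself yield an isomorphism of the $GL(2,F)_N$-extensions, since two extensions of an irreducible representation across an abelian quotient can differ by a character of that quotient. The substantive step is therefore your observation that the specific intertwiner $T\varphi(\alpha)=\varphi(\zeta_0\alpha)$ (with $N(\zeta_0)=t^{-1}$), which realizes the $SL(2,F)$-level isomorphism in the preceding lemma, automatically commutes with the two extra generator families because those formulas are $\psi$-free and involve only left multiplications in the commutative field $E$; together with preservation of $\mathcal{S}(E)_{\theta}$, this forces the extensions to agree on the nose rather than merely up to a twist. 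So your write-up is not a different route from the paper's so much as the missing verification, and it correctly identifies where the only nontrivial analytic input (the Weyl element / Weil-index identity) lives, namely inside the already-stated $SL(2,F)$ lemma.
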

This is an elementary consequence of the previous lemmas. 

  \begin{defn}
   \[
    \pi_{\Theta} = \text{Ind}_{GL(2,F)_N}^{GL(2,F)} \pi_{\Theta,\psi}
   \]
  \end{defn}

\begin{thm}\label{independentofpsi}
 $\pi_{\Theta}$ is irreducible and independent of $\psi$.
\end{thm}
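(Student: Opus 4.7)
The plan is to handle both claims with Mackey's theory applied to the index-two subgroup $GL(2,F)_N$ of $GL(2,F)$. Let $w = \begin{bmatrix}\eta & 0 \\ 0 & 1\end{bmatrix}$ be a representative of the non-trivial coset, so $\eta\in F^\times\setminus N(E^\times)$. Since $GL(2,F)_N$ is normal of index two in $GL(2,F)$, Mackey's irreducibility criterion says $\pi_\Theta$ is irreducible if and only if $\pi_{\Theta,\psi}$ is irreducible and $\pi_{\Theta,\psi}^w \ncong \pi_{\Theta,\psi}$. Irreducibility of $\pi_{\Theta,\psi}$ is built into the construction. For the non-equivalence, I would first upgrade the previous lemma $\pi_{\theta,\psi}^t\cong \pi_{\theta,\psi_t}$ from $SL(2,F)$ to all of $GL(2,F)_N$. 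Since the formulas defining $\pi_{\Theta,\psi}$ on the matrices $\begin{bmatrix}\gamma & 0\\ 0 & 1\end{bmatrix}$ and $\begin{bmatrix}x & 0\\ 0 & x\end{bmatrix}$ do not involve $\psi$, the same intertwining operator constructed on the $SL(2,F)$ side will intertwine $\pi_{\Theta,\psi}^w$ with $\pi_{\Theta,\psi_\eta}$; in particular $\pi_{\Theta,\psi}^w \cong \pi_{\Theta,\psi_\eta}$. Proposition~\ref{repofglNtwistbyt} then gives $\pi_{\Theta,\psi_\eta} \ncong \pi_{\Theta,\psi}$ since $\eta\notin N(E^\times)$, and Mackey's criterion yields irreducibility.

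For the independence from $\psi$, any additive character of $F$ is of the form $\psi_t$ for some $t\in F^\times$, so it suffices to show that building $\pi_\Theta$ from $\psi_t$ gives an isomorphic $GL(2,F)$-representation for every $t$. By Corollary~\ref{weilreplemma} the Weil representation, and hence $\pi_{\Theta,\psi_t}$, depends on $t$ only modulo $(F^\times)^2$, and since $(F^\times)^2\subseteq N(E^\times)$ there are exactly two relevant cases. If $t\in N(E^\times)$, then Proposition~\ref{repofglNtwistbyt} already gives $\pi_{\Theta,\psi_t}\cong \pi_{\Theta,\psi}$ at the level of $GL(2,F)_N$, hence the induced representations agree. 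If $t\notin N(E^\times)$, the argument of the previous paragraph gives $\pi_{\Theta,\psi_t}\cong \pi_{\Theta,\psi}^w$; since $w$ is a coset representative of the normal subgroup $GL(2,F)_N$, we have $\mathrm{Ind}_{GL(2,F)_N}^{GL(2,F)} \pi_{\Theta,\psi}^w \cong \mathrm{Ind}_{GL(2,F)_N}^{GL(2,F)} \pi_{\Theta,\psi}$ by the standard fact that conjugating by a coset representative before inducing does not change the induced representation (for example, via the symmetry of the Mackey decomposition of $\mathrm{Res}\,\mathrm{Ind}$). Either way, $\pi_\Theta$ is independent of $\psi$.

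The most delicate step I expect is the extension of the conjugation-twist identity from $SL(2,F)$ to $GL(2,F)_N$. On $SL(2,F)$ it follows directly from the explicit formulas, but on the extension to $GL(2,F)_N$ one must verify that the intertwining operator built for the $SL(2,F)$-part really commutes with the central action and with the action of $\begin{bmatrix}\gamma & 0\\ 0 & 1\end{bmatrix}$; this amounts to checking that the chosen $\zeta\in E$ with $N(\zeta)=\gamma$ is respected, and that conjugation by $w$ does not disturb the $\psi$-free formula. This is routine bookkeeping, but it is the hinge on which both irreducibility and $\psi$-independence turn.
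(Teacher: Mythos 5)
Your proposal follows essentially the same route as the paper: Mackey/Clifford theory for the index-two normal subgroup $GL(2,F)_N$, using Proposition~\ref{repofglNtwistbyt} together with the conjugation-twist lemma (upgraded from $SL(2,F)$ to $GL(2,F)_N$) to show $\pi_{\Theta,\psi}^{\eta}\ncong\pi_{\Theta,\psi}$ and to identify the two candidate induced representations. The paper closes the $\psi$-independence half with Frobenius reciprocity applied to $\operatorname{Res}\operatorname{Ind}$, whereas you cite conjugation-invariance of induction across a coset representative; these are the same fact in slightly different clothing, and your explicit flagging of the need to extend the twist lemma from $SL(2,F)$ to all of $GL(2,F)_N$ is a step the paper uses but leaves tacit.
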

\begin{proof}
 $GL(2,F)_N$ is an index 2 subgroup in $GL(2,F)$. By Clifford theory, the induced representation $\pi_{\Theta}$ is irreducible if and only if $\pi_{\Theta,\psi}^{\eta}$ is not isomorphic to $\pi_{\Theta,\psi}$. $\eta$ is not in $N(E^{\times})$ by definition, therefore Proposition~\ref{repofglNtwistbyt} implies $\pi_{\Theta}$ is irreducible. 
 
 To prove the independence of $\psi$, we only need to consider Ind$_{GL(2,F)_N}^{GL(2,F)}\pi_{\Theta,\psi}$ and $\text{Ind}_{GL(2,F)_N}^{GL(2,F)} \pi_{\Theta,\psi_{\eta}}$. Note the representation Ind$_{GL(2,F)_N}^{GL(2,F)}\pi_{\Theta,\psi}$ restricted to $GL(2,F)_N$ decomposes into $\pi_{\Theta,\psi}\oplus \pi_{\Theta,\psi}^{\eta}$. By Frobenius reciprocity we have:
 \[
  \text{Ind}_{GL(2,F)_N}^{GL(2,F)}\pi_{\Theta,\psi}\cong \text{Ind}_{GL(2,F)_N}^{GL(2,F)} \pi_{\Theta,\psi_{\eta}}
 \]
hence the independence of $\psi$.
\end{proof}
Now we have paved most of our way towards the Langlands correspondence. We will state a partial correspondence theorem. 
\begin{defn}
 Let $\rho\in \bg_2^0(F)$; one say that $\rho$ is \emph{imprimitive} if there exists a separable quadratic extension $E/F$ and a character $\xi$ of $E^{\times}$ such that $\rho\cong \text{Ind}_{E/F} \xi$. Let $\bg_2^{im}(F)$ denote the set of imprimitive equivalence classes $\rho\in \bg_2^0(F)$.
\end{defn}

\begin{thm}\cite[40.1]{BH}
 \[
  \rho_{\Theta}\mapsto (E/F,\Theta)\mapsto \pi_{\Theta}
 \]
is a bijection $\bg_2^{im}(F)\leftrightarrow \ba_2^0(F)$ and it satisfies Equation~\ref{epsiloncondition} and Equation~\ref{Lcondition}.
\end{thm}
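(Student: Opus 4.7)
The plan is to split the theorem into three tasks: showing that $\pi_\Theta$ lies in $\ba_2^0(F)$ (supercuspidality), that the composite map $\bg_2^{im}(F) \to \mathbb{P}(F) \to \ba_2^0(F)$ is a bijection, and that the resulting correspondence satisfies the $L$- and $\epsilon$-factor identities. The first arrow $\rho_\Theta \leftrightarrow (E/F,\Theta)$ is Theorem~\ref{admpairsparamG} (restricted to tame pairs, which is where imprimitive representations live in characteristic zero), so the real content concerns the second arrow $(E/F,\Theta) \mapsto \pi_\Theta$.

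For supercuspidality, I would check directly from the formulas for $\pi_{\theta,\psi}$ that $\pi_\Theta$ has trivial Jacquet module along the unipotent radical of the Borel: the identity
\[
 \pi_{\theta,\psi}\begin{bmatrix} 1 & b \\ 0 & 1 \end{bmatrix}\cdot \varphi(\alpha) = \psi(b\,N(\alpha))\varphi(\alpha)
\]
shows that averaging any $\varphi \in \mathcal{S}(E)_\theta$ over a large unipotent subgroup forces the support of $\varphi$ into $\{\alpha : N(\alpha)=0\}$, which on the support of a nonzero $\varphi$ gives $\alpha = 0$ and hence $\varphi = 0$ by $\theta$-equivariance. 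Injectivity of $(E/F,\Theta) \mapsto \pi_\Theta$ then follows by computing the character of $\pi_\Theta$ on the elliptic torus $E^\times \hookrightarrow GL(2,F)$: on regular elements $t$ the leading term is, up to an explicit constant, $\Theta(t) + \Theta^\sigma(t)$, which recovers the $\mathrm{Gal}(E/F)$-orbit of $\Theta$ and hence the $F$-isomorphism class of the pair. Surjectivity can be established either by counting supercuspidals of a given central character and conductor on both sides, or more structurally by appealing to the tame parametrization of cuspidals under which every class is realized this way.

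For the $L$- and $\epsilon$-factor identities, the strategy is to reduce both sides to $GL(1,E)$-factors and match. On the Galois side, inductivity yields
\[
 \epsilon(\chi\otimes \rho_\Theta, s, \psi) = \lambda(E/F,\psi)\, \epsilon((\chi\circ N_{E/F})\Theta, s, \psi\circ \mathrm{Tr}_{E/F})
\]
and $L(\chi\otimes\rho_\Theta,s) = L((\chi\circ N_{E/F})\Theta, s)$. On the $GL(2,F)$ side, the Kirillov model of $\chi\pi_\Theta$ inherits from our formulas the twisted Fourier transform with constant $\gamma(\Delta,\psi)$ and the quadratic Hilbert symbol $(\cdot,\Delta)$, so applying the local functional equation to a test vector supported on a single coset of $N(E^\times)$ produces $\epsilon(\chi\pi_\Theta,s,\psi)$ as the same $GL(1,E)$-factor times an explicit product of $\gamma(\Delta,\psi)$ and a Gauss-sum contribution from $\Theta$. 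The $L$-factor identity is then immediate, since $\pi_\Theta$ being supercuspidal forces $L(\chi\pi_\Theta,s)$ to be a conductor-determined monomial matching the $GL(1,E)$-factor.

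The hard part will be matching this explicit product of quadratic Gauss sums on the automorphic side with the Langlands constant $\lambda(E/F,\psi)$ on the Galois side. This is a Stickelberger-type identity for tamely ramified quadratic extensions, and its verification requires tracking the quadratic Hilbert symbol together with the Weil index of $\psi \circ N_{E/F}$; it is the one unavoidable computation in the argument, and the factor $\gamma(\Delta,\psi)$ built into our formulas for $\pi_{\theta,\psi}$ is precisely what realizes $\lambda(E/F,\psi)$, which is the underlying reason the Weil-representation construction yields the correct correspondence.
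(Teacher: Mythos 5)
The paper does not actually prove this theorem from scratch: it observes that the representation $\pi_{\Theta,\psi}$ constructed here via the theta correspondence coincides with Bushnell--Henniart's Weil-representation construction $\xi_{\varkappa}(\Theta,\psi)$ (the intertwining operator being \cite[39.2]{BH}), and then simply cites \cite[\S 39, \S 40]{BH} for the bijectivity and the $L$- and $\epsilon$-factor identities. Your proposal, by contrast, is an independent from-scratch program. The two routes buy different things: the paper's route is short but entirely dependent on having BH's parallel construction already in hand and proved; yours is self-contained in spirit and would, if completed, make the theta-correspondence construction logically independent of BH's, which is arguably a cleaner way to present the result.

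That said, your proposal is a program, not yet a proof, and the places where it stops short are exactly where BH's section 39 does real work. Your supercuspidality argument is correct and complete as stated: the unipotent action $\varphi \mapsto \psi(bN(\alpha))\varphi$, the fact that $N(\alpha)=0$ forces $\alpha=0$ in a field, and $\theta$-equivariance with $\theta$ nontrivial (guaranteed by admissibility of the pair) do kill the Jacquet module. But the injectivity claim via characters on $E^\times\hookrightarrow GL(2,F)$ --- that the leading term is $\Theta(t)+\Theta^\sigma(t)$ up to an explicit constant --- is asserted rather than derived from the Weil-representation formulas, and deriving it is a substantial computation (one essentially has to establish a character identity of the shape in \cite{SS}, which the paper itself invokes only for the much weaker lemmas preceding Proposition~\ref{repofglNtwistbyt}). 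Surjectivity by ``counting supercuspidals of a given conductor'' works in odd residual characteristic but needs the tame parametrization as input, so it is not genuinely independent of prior classification results. Most seriously, the $\epsilon$-factor matching --- your ``Stickelberger-type identity'' equating $\lambda(E/F,\psi)$ with an explicit product of $\gamma(\Delta,\psi)$ and quadratic Gauss sums --- is precisely the core of \cite[\S 39]{BH}; as written you have identified the problem correctly but not solved it, so the proposal, read on its own, does not yet establish equations (\ref{Lcondition}) and (\ref{epsiloncondition}). To make the argument complete you would either need to carry out that Gauss-sum identity in full, or do what the paper does and exhibit the intertwiner $\pi_{\Theta,\psi}\cong\xi_{\varkappa}(\Theta,\psi)$ so that BH's computation can be imported.
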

The proof of this theorem can be found in \cite{BH} section 39. One only need to make the observation that $\pi_{\Theta,\psi}\cong \xi_{\varkappa}(\Theta,\psi)$, the latter notation is from \cite{BH}. The intertwining operator is presented in \cite[39.2]{BH}.
\begin{rmk}
 When the residual characteristic of $F$ is odd, we have $\bg_2^{im}(F) = \bg_2^0(F)$. If $p = 2$, we have $\bg_2^{im}(F)\subsetneq \bg_2^0(F)$. In which case the theorem only provides a partial correspondence.
\end{rmk}
\section{Appendix}
In this section, $W$ is a symplectic vector space with skew-symmetric bilinear form $\lgl,\rgl$, $X$ and $Y$ are two transversal Lagrangian subspaces of $W$. $Sp(W)$ has Bruhat decomposition $Sp(W) = P\Omega P$ where $P$ is the stablizer of $Y$, $\Omega$ is the Weyl group.
\begin{thm}\cite[Theorem4.1]{RR}
 \[
  c(g_1,g_2) = \text{Weil index of }w\mapsto \psi(\frac{1}{2}\lgl w, w\cdot \rho \rgl )
 \]
where the isometry class of $\rho$ is given by the Leray invariant $q(Y,Yg_2^{-1},Yg_1)$
\end{thm}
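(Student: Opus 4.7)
The plan is to compute the composition $\omega_\psi(g_1)\omega_\psi(g_2)$ directly from the Schr\"odinger integral formula and read the cocycle off by comparison with $\omega_\psi(g_1 g_2)$. Writing $g_i = \begin{bmatrix} A_i & B_i \\ C_i & D_i \end{bmatrix}$ and substituting the formula for $\omega_\psi(g_2)\varphi$ into that for $\omega_\psi(g_1)$, the result is a double integral over $\ker C_1\backslash Y \times \ker C_2 \backslash Y$ whose integrand is $\psi$ applied to a quadratic phase in $(x,y_1,y_2)$ times $\varphi(xA_1A_2 + y_1 C_1 A_2 + y_2 C_2)$. On the other hand $\omega_\psi(g_1 g_2)\varphi(x)$ is a single integral, over $\ker(C_1 A_2 + D_1 C_2)\backslash Y$, of $\psi$ of a different quadratic phase times $\varphi(x(A_1A_2 + B_1 C_2) + y(C_1 A_2 + D_1 C_2))$.

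The next step is a change of variables replacing $(y_1,y_2)$ by a pair $(y,v)$, where $y$ is the unique linear combination whose contribution to the $\varphi$-argument reproduces $y(C_1A_2 + D_1 C_2)$, and $v$ ranges over a complementary space $V$. After absorbing the $xB_1C_2$ shift into $y_2$, the double integral separates as $(\omega_\psi(g_1 g_2)\varphi)(x)$ times a pure Gaussian integral $\int_V \psi(Q(v))\, dv$ for a non-degenerate $F$-valued quadratic form $Q$ on $V$. By Weil's normalization of the Haar measures $d\mu_g$, this scalar is exactly the Weil index $\gamma_\psi(Q)$, so $c(g_1,g_2) = \gamma_\psi(Q)$ and the cocycle is identified as the Weil index of a quadratic form.

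What remains is to identify the isometry class of $Q$ with the Leray invariant $q(Y, Yg_2^{-1}, Yg_1)$. The natural way is to recognize $V$ as (a concrete representative of) the Kashiwara--Maslov quotient
\[
 \bigl(Y + Yg_2^{-1} + Yg_1\bigr)\big/\bigl((Y\cap Yg_2^{-1}) + (Y\cap Yg_1) + (Yg_2^{-1}\cap Yg_1)\bigr),
\]
and to check that the form $Q$ obtained from the change of variables coincides with the restriction of $w \mapsto \tfrac{1}{2}\lgl w, w\rho\rgl$ to this quotient, for an endomorphism $\rho$ whose existence and isometry class are dictated by the triple $(Y, Yg_2^{-1}, Yg_1)$. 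Granting this, one may replace $Q$ by the intrinsic Leray form and conclude.

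The main obstacle is precisely this last identification: the form $Q$ that falls out of the Schr\"odinger composition is presented in block coordinates adapted to the chosen polarization $W = X + Y$, while the Leray invariant is intrinsically attached to the unordered triple of Lagrangians. Matching the two by hand requires tracing each intersection and sum through the change of variables, case-by-case across the Bruhat strata $P\tau_j P$ where $\dim(\ker C_i)$ varies. A cleaner alternative, which bypasses most of this combinatorics, is to realize the Weil representation in the Lion--Vergne model built from triples of Lagrangians, in which the cocycle appears tautologically as the Leray form; specializing that model back to the Schr\"odinger picture via the standard intertwiner yields the claimed formula, and is the route effectively taken in [RR].
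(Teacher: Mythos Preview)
The paper does not prove this statement at all: it appears in the Appendix as a direct citation of \cite[Theorem~4.1]{RR}, with no argument given. There is therefore no ``paper's own proof'' to compare your proposal against.

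For what it is worth, your outline is a faithful sketch of Rao's actual argument: compose the two Schr\"odinger integrals, change variables so that the double integral factors as $\omega_\psi(g_1g_2)\varphi$ times a Gaussian integral $\int_V \psi(Q(v))\,dv$, recognize the scalar as a Weil index, and then identify $Q$ with the Leray form of the triple $(Y,\,Yg_2^{-1},\,Yg_1)$. You are also right that the last identification is the genuinely laborious step and that it is cleaner in the Lion--Vergne Lagrangian model; this is essentially what \cite{RR} does. Your proposal is a plan rather than a proof---the Bruhat-cell bookkeeping and the precise matching of $Q$ with the Leray invariant are asserted, not carried out---but as a strategy it is correct and is the standard one.
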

\begin{lemma}\cite[Lemma5.1]{RR}
 There exists a unique map $f:Sp(W)\rightarrow F^{\times}/(F^{\times})^2$ such that the following holds:
 \begin{enumerate}[(i)]
  \item  $f(p_1gp_2) = f(p_1)f(g)f(g_2)\ \ \forall p_1,p_2\in P$
  \item $f(\tau_S) = 1$ for all subsets $S\subset \{1,2,\cdots,n\}$
  \item $f(p) = \det(p|_Y)(F^{\times})^2\ \ \forall p \in P$
 \end{enumerate}
Moreover, such a function is uniquely defined by 
\[
 f(p_1\tau_Sp_2) = \det(p_1p_2|_Y)(F^{\times})^2
\]
\end{lemma}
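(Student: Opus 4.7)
The plan is to split the argument into \textbf{uniqueness} (which is essentially forced by axioms (i)--(iii)) and \textbf{existence} (which amounts to checking well-definedness of the closed-form expression on Bruhat cells).

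\textbf{Uniqueness.} Because $Sp(W) = \bigsqcup_S P\tau_S P$ is a disjoint union, the subset $S$ is uniquely determined by $g$. Writing $g = p_1 \tau_S p_2$ in any way and applying (i), (ii), (iii) in turn,
\[
 f(g) = f(p_1) f(\tau_S) f(p_2) = \det(p_1|_Y) \cdot 1 \cdot \det(p_2|_Y) \pmod{(F^\times)^2},
\]
which both forces uniqueness and produces the closed-form expression $f(p_1\tau_S p_2) = \det(p_1p_2|_Y)(F^\times)^2$ stated at the end of the lemma.

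\textbf{Existence.} I would take that closed form as the definition of $f$ and verify well-definedness, since then (i)--(iii) are immediate (for (i) rewrite $p_1'(p_1\tau_Sp_2)p_2' = (p_1'p_1)\tau_S(p_2 p_2')$ and use multiplicativity of $\det$; for (ii) take $p_1=p_2=1$ and note $\det(I|_Y)=1$; for (iii) note $p=p\cdot\tau_\emptyset\cdot 1$ sits in the Bruhat cell $S=\emptyset$). Well-definedness goes as follows. Suppose $g = p_1 \tau_S p_2 = p_1' \tau_S p_2'$ (with the same $S$, by disjointness of cells). Let $q = p_1^{-1} p_1' \in P$ and $q'' = p_2 (p_2')^{-1} \in P$; from the equation one sees $q = \tau_S q'' \tau_S^{-1}$, so in particular $q \in P \cap \tau_S P \tau_S^{-1}$. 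A short rearrangement gives
\[
 \det\bigl((p_1' p_2')|_Y\bigr) = \det\bigl((p_1 p_2)|_Y\bigr) \cdot \frac{\det(q|_Y)}{\det(q''|_Y)},
\]
so well-definedness reduces to the following \emph{square condition}: for every $q'' \in P$ such that $\tau_S q''\tau_S^{-1} \in P$, the ratio $\det(\tau_S q'' \tau_S^{-1}|_Y)/\det(q''|_Y)$ lies in $(F^\times)^2$.

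\textbf{The hard step} is this square condition, which is the technical heart of the lemma. My approach would be a direct block computation using the explicit form of the Weyl element $\tau_S$: it swaps $x_i \leftrightarrow y_i$ (up to sign) for $i \in S$ and fixes the basis vectors indexed outside $S$. Accordingly, decompose $Y = Y_S \oplus Y_{S^c}$ where $Y_S = \mathrm{span}\{y_i : i\in S\}$ and $Y_{S^c} = \mathrm{span}\{y_i : i\notin S\}$. Elements of $P \cap \tau_S P\tau_S^{-1}$ must preserve both $Y$ and $\tau_S Y$, and hence preserve $Y_{S^c}$; writing them out in block form with respect to $Y_S \oplus Y_{S^c}$ and tracking how $\tau_S$-conjugation acts on the $Y$-restriction, the $Y_{S^c}$-block is unchanged while the $Y_S$-block is essentially transposed via a symplectic partner, so the two determinants differ only by the square of the determinant of an off-diagonal intertwining matrix. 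This yields the desired element of $(F^\times)^2$.

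\textbf{Main obstacle.} The combinatorics of $P \cap \tau_S P\tau_S^{-1}$ and the bookkeeping with signs arising from the symplectic form are where errors creep in; the rest of the argument (deriving (i)--(iii) and the final displayed formula) is formal once well-definedness is in hand. If the block computation becomes unwieldy, an alternative is to reduce to the rank-one case $|S|=1$ via the relations $\tau_S = \prod_{i \in S} \tau_{\{i\}}$ together with axiom (i), and then do the square-class calculation one coordinate at a time, which cleanly isolates the sign issue.
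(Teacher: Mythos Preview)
The paper does not prove this lemma at all; it is quoted verbatim from \cite[Lemma~5.1]{RR} and immediately followed by a pointer back to \cite{RR} for the definition of $\tau_S$, with no argument given. So there is no in-paper proof to compare against; your write-up would be supplying a proof where the paper has none.

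On the substance of your argument, one factual slip: the decomposition $Sp(W)=\bigsqcup_S P\tau_S P$ is \emph{not} disjoint over subsets $S$. Here $P$ is the Siegel parabolic (stabilizer of the Lagrangian $Y$), not a Borel, and the $P$-double cosets are indexed only by the integer $j=|S|\in\{0,\dots,n\}$: any two $\tau_S,\tau_{S'}$ with $|S|=|S'|$ are conjugate by a permutation matrix in the Levi $GL(n)\subset P$, hence lie in the same cell. So ``$S$ is uniquely determined by $g$'' is false. This does not harm your uniqueness paragraph (any single decomposition $g=p_1\tau_Sp_2$ still forces the value via (i)--(iii)), but it does leave a gap in your existence paragraph: you only check well-definedness against a second decomposition with the \emph{same} $S$, whereas you must also compare $p_1\tau_Sp_2=p_1'\tau_{S'}p_2'$ with $|S|=|S'|$ but $S\neq S'$. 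The fix is short---write $\tau_{S'}=w\,\tau_S\,w^{-1}$ with $w$ a coordinate permutation in $P$, absorb $w,w^{-1}$ into $p_1',p_2'$, and note $\det(w|_Y)\det(w^{-1}|_Y)=1$---but it should be stated.

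Your ``square condition'' is the correct crux, and the block computation you outline (split $Y=Y_S\oplus Y_{S^c}$, observe that $\tau_S$-conjugation replaces the $Y_S$-block of $q''|_Y$ by its inverse-transpose coming from the $X_S$-block, leaving the $Y_{S^c}$-block untouched) does give a ratio in $(F^\times)^2$; the $n=1$ case already exhibits the mechanism, where $q''=\mathrm{diag}(a,a^{-1})$ is sent to $\mathrm{diag}(a^{-1},a)$ and the ratio of $Y$-determinants is $a^2$.
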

See \cite[P345]{RR} for the definition of $\tau_S$.

\begin{defn}
 Define the normalizing constant
 \[
  m(g) = \gamma_F(f(g),\frac{1}{2}\psi)^{-1}\left(\gamma_F(\frac{1}{2}\psi)\right)^{-j}
 \]
for $g\in \Omega_j P\tau_S P$ with $j = |S|$, $\gamma_F(\frac{1}{2}\psi)$ is the Weil index of $\alpha\mapsto \frac{1}{2}\psi(\alpha^2)$, $\gamma_F(a,\psi) = \gamma_F(\psi_a)/\gamma_F(\psi)$.
\end{defn}

\begin{thm}\cite[P364]{RR}
 If $\dim W = 2$, then 
 \[
  \tilde{c}(g_1g_2) = (f(g_1),f(g_2))(-f(g_1)f(g_2),f(g_1g_2))
 \]
\[
 f:\begin{bmatrix}a&b\\c&d \end{bmatrix} \mapsto \begin{cases} d(F^{\times})^2&c=0 \\ c(F^{\times})^2&c\neq 0 \end{cases}
\]
\end{thm}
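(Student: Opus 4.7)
The statement has two parts: an explicit formula for the map $f$ on $Sp(W)=SL(2,F)$, and the identification of $\tilde c$ as a product of Hilbert symbols. I would treat them in that order.

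\emph{Identifying $f$.} When $\dim W = 2$ the Bruhat decomposition $Sp(W)=P\cup P\tau P$ has only two cells, indexed by $S\subset\{1\}$. For the small cell ($c=0$), clause (iii) of the uniqueness lemma gives $f(p)=\det(p|_Y)(F^\times)^2$; since $ad=1$ in $SL_2$ the two diagonal entries represent the same square class, yielding $f(p)=d(F^\times)^2$. For the big cell ($c\neq 0$), I would write the Gauss-type decomposition $g=p_1\tau p_2$ with
\[
 p_1=\begin{bmatrix}1 & a/c\\ 0 & 1\end{bmatrix},\qquad p_2=\begin{bmatrix}-c & -d\\ 0 & -c^{-1}\end{bmatrix},
\]
and apply $f(p_1\tau p_2)=\det(p_1p_2|_Y)(F^\times)^2$ from the uniqueness lemma to read off $f(g)=c(F^\times)^2$ (any sign gets absorbed into the square class).

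\emph{Reducing $\tilde c$ to Hilbert symbols.} By definition,
\[
 \tilde c(g_1,g_2)=m(g_1g_2)\,c(g_1,g_2)\,m(g_1)^{-1}m(g_2)^{-1},
\]
with $m(g)=\gamma_F(f(g),\tfrac{1}{2}\psi)^{-1}\gamma_F(\tfrac{1}{2}\psi)^{-j}$ on the $j$-th Bruhat cell. Each ratio of Weil indices simplifies via the standard multiplicativity
\[
 \frac{\gamma_F(ab,\tfrac{1}{2}\psi)}{\gamma_F(a,\tfrac{1}{2}\psi)\,\gamma_F(b,\tfrac{1}{2}\psi)}=(a,b).
\]
Rao's Theorem~4.1 in the Appendix expresses the unnormalized $c(g_1,g_2)$ as the Weil index of a quadratic form determined by the Leray invariant $q(Y,Yg_2^{-1},Yg_1)$; for $\dim W=2$ this is a binary form whose discriminant is computable directly from the matrix entries of $g_1,g_2$, so $c(g_1,g_2)$ itself reduces to a product of factors $\gamma_F(\cdot,\tfrac{1}{2}\psi)$ evaluated on explicit elements of $F^\times$.

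\emph{Assembling the formula.} Running through the four Bruhat cases $(j_1,j_2)\in\{0,1\}^2$, the $\gamma_F(\tfrac{1}{2}\psi)^{-j}$ factors coming from the three $m$'s combined with the analogous factor produced by the rank of the Leray form cancel out, leaving only Weil-index ratios of the shape above. Bimultiplicativity and symmetry of the Hilbert symbol together with $(a,-a)=1$ collapse the product into the stated expression $(f(g_1),f(g_2))(-f(g_1)f(g_2),f(g_1g_2))$.

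\emph{Main obstacle.} The real labor is the case-by-case identification of the Leray-invariant discriminant, particularly in the mixed case where $g_1,g_2\in P\tau P$ but $g_1g_2\in P$ (or vice versa), because the dimension count shifts and some $\gamma_F(\tfrac{1}{2}\psi)$ factors must be re-expressed via the standard identity $\gamma_F(\tfrac{1}{2}\psi)^2=\gamma_F(-1,\tfrac{1}{2}\psi)$. This is precisely where the $-1$ in $-f(g_1)f(g_2)$ enters. Once the Leray discriminant is pinned down in each case, the remainder is routine Hilbert-symbol bookkeeping, and a useful sanity check is to verify that the proposed closed form satisfies the cocycle identity directly from bimultiplicativity of $(,)$.
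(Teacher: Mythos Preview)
The paper does not supply its own proof of this theorem: it is quoted verbatim from Rao \cite[p.~364]{RR} and left without argument in the Appendix. So there is nothing in the paper to compare your proposal against. Your outline is essentially the route Rao himself takes---compute $f$ from the Bruhat cell structure, then feed Theorem~4.1 (the Leray-invariant description of the unnormalized cocycle) through the definition of $m(g)$ and reduce everything to Hilbert symbols via the multiplicativity relation $\gamma_F(ab,\tfrac12\psi)=\gamma_F(a,\tfrac12\psi)\gamma_F(b,\tfrac12\psi)(a,b)$.

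One concrete slip to fix before you flesh this out: with your choice of $p_2=\begin{bmatrix}-c & -d\\ 0 & -c^{-1}\end{bmatrix}$ you get $\det(p_1p_2|_Y)=-c^{-1}$, hence $f(g)=-c\,(F^{\times})^2$, not $c\,(F^{\times})^2$. The remark that ``any sign gets absorbed into the square class'' is false in general, since $-1$ need not be a square in $F$. The discrepancy comes from the convention for $\tau$: Rao's $\tau_{\{1\}}$ is the other Weyl element, and with that choice the decomposition is $p_1=\begin{bmatrix}1 & a/c\\ 0 & 1\end{bmatrix}$, $p_2=\begin{bmatrix}c & d\\ 0 & c^{-1}\end{bmatrix}$, giving $\det(p_1p_2|_Y)=c^{-1}\equiv c$ as required. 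This matters downstream, because the same $-1$ is what eventually surfaces in the factor $(-f(g_1)f(g_2),f(g_1g_2))$; if you introduce a spurious sign at the level of $f$ the final Hilbert-symbol bookkeeping will not close up.
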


\begin{thm}\label{Weilformulas}
Here are the explicit formulas for the Weil representation $\tilde{\omega}_{\psi}$ of $Mp(4)$:
\begin{equation*}
\begin{split}
& \tilde{\omega}_{\psi}\left(\begin{bmatrix}
A&0\\0&^tA^{-1}\end{bmatrix}, \epsilon\right)\cdot \varphi (\alpha) = \epsilon\cdot  \frac{\gamma(\frac{1}{2}\psi)}{\gamma(\det A\cdot \frac{1}{2}\psi)}\lvert \det A\rvert^{\frac{1}{2}}\varphi(\alpha\cdot  A) \\
& \tilde{\omega}_{\psi}\left(\begin{bmatrix}
I&B\\0&I\end{bmatrix}, \epsilon\right)\cdot \varphi(x) = \epsilon\cdot \psi\left(\frac{1}{2}\langle x,x\cdot B\rangle\right)\varphi(x) \\
& \tilde{\omega}_{\psi}\left( \tau_{\{1\}},\epsilon \right)\cdot \varphi(x) = \epsilon\cdot \gamma(\frac{1}{2}\psi)^{-1} \int_{Y/Y_2} \psi(-b_1\alpha_1)\varphi(-b_1x_1+\alpha_2x_2)d\mu_g(b_1) \\
& \tilde{\omega}_{\psi}\left( \tau_{\{2\}},\epsilon \right)\cdot \varphi(x) = \epsilon\cdot \gamma(\frac{1}{2}\psi)^{-1}\int_{Y/Y_1}\psi(-b_2\alpha_2)\varphi(\alpha_1x_1-b_2x_2)d\mu_g(b_2) \\
& \tilde{\omega}_{\psi}\left( \tau_{\{1,2\}},\epsilon \right)\cdot \varphi(x) = \epsilon\cdot \gamma(\frac{1}{2}\psi)^{-2} \int_Y \psi(\langle \alpha,y\rangle)\varphi(y)d\mu_g(y) \\
& \mu_p\{0\} = \lvert\det p|_Y\rvert^{-1/2}
\end{split}
\end{equation*}

$Y_i = span_{\mathbb{C}}\{y_i\}$, $y\in Y$ is represented by coordinates $(b_1,b_2)$ with basis $\{y_1,y_2\}$. $\alpha\in E$ is represented by coordinates $(\alpha_1,\alpha_2)$ with basis $\{ 1, -\frac{1}{\Delta}\delta\}$
\end{thm}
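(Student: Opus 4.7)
The plan is to verify each of the five displayed formulas (plus the measure normalization) by direct specialization of the general Segal--Shale--Weil integral
\[
\bigl(\omega_\psi(g)\cdot \varphi\bigr)(x) = \int_{\ker C\backslash Y} \psi\!\left(\tfrac{1}{2}\langle xA+yC, xB\rangle + \tfrac{1}{2}\langle yC, xB+yD\rangle\right)\varphi(xA+yC)\, d\mu_g(y)
\]
given in Definition (Standard Segal-Shale-Weil representation), and then multiplying by the normalizing constant $m(g)$ from the Appendix to pass from $\omega_\psi$ to $\omega_\psi' = m(g)\omega_\psi(g)$, and finally attaching the $\epsilon = \pm 1$ factor from the definition of the Schrödinger model of the Weil representation. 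I will handle the five blocks in three groups, according to how the integrand collapses.

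First I would treat the Siegel Levi element $g=\bigl[\begin{smallmatrix} A&0\\ 0&{}^tA^{-1}\end{smallmatrix}\bigr]$. Here $B=0$ and $C=0$, so $\ker C=Y$, the quotient $\ker C\backslash Y$ is a point, and the $\psi$--argument is identically zero. The integral collapses to $\varphi(xA)$, and the Haar measure at the point contributes the factor $|\det A|^{1/2}$ via $\mu_p\{0\}=|\det p|_Y|^{-1/2}$. The diagonal $g$ lies in $P$ with $j=|S|=0$, so $m(g)=\gamma_F(f(g),\tfrac12\psi)^{-1}$; combined with the Lemma from the Appendix giving $f(g)=\det A\cdot(F^\times)^2$, this produces the factor $\gamma(\tfrac12\psi)/\gamma(\det A\cdot \tfrac12\psi)$ in the stated formula. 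The unipotent element $\bigl[\begin{smallmatrix} I&B\\ 0&I\end{smallmatrix}\bigr]$ is handled the same way: again $C=0$ kills the integral, $A=D=I$ reduces the $\psi$-argument to $\tfrac12\langle x,xB\rangle$, and $g\in P$ with $f(g)=1$ forces $m(g)=1$, giving the multiplication operator asserted.

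Next I would treat the Weyl elements $\tau_S$, where the integral becomes a genuine (partial) Fourier transform. For $\tau_{\{1,2\}}$ (full Weyl element, $A=D=0$, $B=I$, $C=-I$) the kernel of $C$ is trivial, the integration is over all of $Y$, and the cross term in the $\psi$-argument reduces to $\langle \alpha, y\rangle$ up to a sign after re-parameterising $y\mapsto -y$. Since $\tau_{\{1,2\}}\in P\tau_{\{1,2\}}P$ with $|S|=j=2$ and $f(\tau_S)=1$, the Appendix gives $m(\tau_{\{1,2\}})=\gamma(\tfrac12\psi)^{-2}$, producing the stated formula. For $\tau_{\{1\}}$ and $\tau_{\{2\}}$ one does the same, but now $\ker C$ is a one-dimensional subspace of $Y$ (the line $Y_2$, resp.\ $Y_1$, that is fixed), so the integration is over $Y/Y_2$ (resp.\ $Y/Y_1$). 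Writing $y$ in coordinates $(b_1,b_2)$ on $\{y_1,y_2\}$ and expanding $\langle xA+yC, xB\rangle + \langle yC, xB+yD\rangle$ in coordinates yields the phase $-b_1\alpha_1$ (resp.\ $-b_2\alpha_2$), the argument of $\varphi$ becomes $-b_1x_1+\alpha_2x_2$ (resp.\ $\alpha_1 x_1-b_2 x_2$), and the prefactor is $m(\tau_{\{i\}})=\gamma(\tfrac12\psi)^{-1}$ since $j=1$ and $f(\tau_{\{i\}})=1$.

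Finally, the measure normalization $\mu_p\{0\}=|\det p|_Y|^{-1/2}$ is the characterization of $\mu_g$ specified in the definition: $\mu_g$ is the Haar measure on $(Yg^{-1}\cap Y)\backslash Y$ for which $\omega_\psi(g)$ is an $L^2$-isometry, and for $g=p\in P$ this collapses to the unique Dirac mass at $0$ of total mass $|\det p|_Y|^{-1/2}$, which is exactly what is needed to make the diagonal formula in the first line above preserve $L^2$-norms. I expect the main obstacle to be purely notational: tracking signs and Weil indices through the normalization $m(g)$, and correctly identifying $f(g)$ via the Bruhat cell of $g$, so that the net $\gamma$-factor on each line comes out as stated. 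All five formulas then follow by direct substitution into the definitions, with no additional input beyond the results of Rao and Kudla already cited in the Appendix.
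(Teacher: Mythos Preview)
The paper does not actually prove this theorem: it is stated in the Appendix as a list of formulas, with no argument attached, immediately before the bibliography. Your proposal---specializing the Segal--Shale--Weil integral from the Definition to each block type and then inserting the normalizing constant $m(g)$ and the $\epsilon$-factor from the Schr\"odinger model---is the natural way to derive these formulas, and the computations you outline are correct (in particular your identification of $\ker C$, the partial Fourier transforms for $\tau_{\{1\}}$ and $\tau_{\{2\}}$, and the $\gamma$-factors coming from $m(g)$ via $f(\tau_S)=1$ and $f(p)=\det(p|_Y)$). So your approach is not really ``different'' from the paper's: it simply supplies the routine verification that the paper leaves to the reader, relying only on the results of Rao already quoted in the Appendix. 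One small remark: for the Levi element you write $f(g)=\det A\cdot(F^\times)^2$, whereas the Lemma gives $f(p)=\det(p|_Y)=\det({}^tA^{-1})$; these agree modulo squares, but it is worth saying so explicitly so that the $\gamma$-factor manipulation is transparent.
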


\bibliography{references}{}
\bibliographystyle{plain}
\nocite{BH, Bump, RR, Prasad, MVW, Howe, Adams, Kudla, SS, KudlaSplitting}
\end{document}